\newtheorem{theorem}{Theorem}
\newtheorem{proposition}{Proposition}
\newtheorem{lemma}{Lemma}
\newtheorem{definition}{Definition}
\newtheorem{remark}{Remark}
\numberwithin{equation}{section}
\numberwithin{lemma}{section}
\DeclareMathOperator{\diag}{Diag}
\title[]{Uniform $h$--dichotomies: noncritical uniformity and expansivity}
\author[Elorreaga]{Heli Elorreaga}
\author[Pe\~na]{Juan Francisco Pe\~na}
\author[Robledo]{Gonzalo Robledo}
\address{Departamento de Matem\'aticas, Universidad de Chile, Casilla 653, Santiago, Chile.}
\keywords{non autonomous differential equations, dichotomies, bounded growth, noncritical uniformity, expansiveness}
\subjclass{34D09,34A30,34C11}
\thanks{This research has been supported by Agency for Research and Development, ANID-Chile through the grants ANID/FONDECYT 1210733 (G. Robledo) the FONDECYT Postdoctorado project 3240354 (H. Elorreaga) and the doctoral
scholarship ANID/21210870 (J. Pe\~na).}
\begin{document}

\begin{abstract}
The property of exponential dichotomy can be seen as a generalization of the hyperbolicity condition
for non autonomous linear finite dimensional systems of ordinary differential equations. In 1978 W.A. Coppel proved that the exponential dichotomy 
on the half line is equivalent to the property of noncritical uniformity provided that a condition of bounded growth is verified.
In 2006 K.J. Palmer extended this result by proving that -also assuming the bounded growth property- the exponential dichotomy 
on the half line, noncritical uniformity and the exponential expansiveness are equivalent. The main contribution of this article is to 
generalize these results for the property of uniform $h$--dichotomy. This has been carried out due to a recent idea: 
under suitable conditions any $h$--dichotomy can be associated to a totally ordered topological group, which becomes the
additive group $(\mathbb{R},+)$ in case of the exponential dichotomy. The properties of this new group make possible such generalization. 
\end{abstract}

\maketitle

\section{Introduction}
This work revisits the property of uniform $h$--dichotomy
for non  autonomous linear systems of differential equations: 
\begin{equation}
\label{lin}
x'=A(t)x  \quad \textnormal{for any $t \in J:=(a_{0},+\infty)$},
\end{equation}
where $a_{0} \in \mathbb{R}$ or $J=\mathbb{R}$ and the map $t\mapsto A(t)\in M_{n}(\mathbb{R})$ is continuous on $J$. In particular, we will obtain new characterizations of this property. 

The main feature of the uniform $h$--dichotomy is that any of the nontrivial solutions of the system (\ref{lin}) can be decomposed into two components, namely a contractive and expansive one, whose respective convergence and
divergence rates are dominated by a function $h(\cdot)$ which has suitable properties.

The specific case $h(t)=e^{t}$ corresponds to the property of exponential dichotomy, which 
has was introduced in 1930 in the seminal work written by O. Perron \cite{Perron}. There exists several monographs \cite{coppel,Lin2,Massera,Mitropolski} devoted to the
exponential dichotomy and its applications. The $h$--dichotomies encompasses
the exponential dichotomy and a big amount of research has been focused on the task of extending the classical result
of the exponential dichotomy to an $h$--dichotomy framework. Although such a program has been very successful in some cases, the generalization of others has remained elusive. 

A recent result from one of the authors \cite{PV} has studied the contractive case of the
uniform $h$--dichotomies and constructed a totally ordered topological group $(J,\ast)$. We realized that the pro\-per\-ties of this group allow us to carry out a generalization to the $h$--dichotomy framework of a couple of well known results, namely,
the equivalence between the exponential dichotomy with the properties
of uniform noncriticality and exponential expansiveness.

\medskip

\noindent \textit{Notation:} From now on, a fundamental matrix of (\ref{lin}) will be denoted by $\Phi(t)$, while the corresponding transition matrix will be $\Phi(t,s)$. On the other hand, given
any norm $|\cdot|$ on $\mathbb{R}^{n}$, its induced matrix norm will be denoted by $||\cdot||$.

\subsection{The uniform $h$--dichotomy: some basic facts and main contribution}

\begin{definition}
\label{UHD}
Given an homeomorphism $h\colon (a_{0},+\infty)\to (0,+\infty)$,
where $h(\cdot)$ is strictly increasing and an interval $\mathcal{I}\subset J$, the linear system \eqref{lin} has a uniform $h$--dicohotomy on
the interval $\mathcal{I}\subset J$
if there exists a projector $P(t)$ and a couple of constants $K\geq 1$ and $\alpha>0$ such that
\begin{equation}
\label{Invariance}
P(t)\Phi(t,s)=\Phi(t,s)P(s) \quad \textnormal{for any $t,s\in (a,+\infty)$},
\end{equation}
and
\begin{equation}
\label{HD}
\left\{\begin{array}{rcl}
||\Phi(t,s)P(s)||  &  \leq & \displaystyle  K\left(\frac{h(t)}{h(s)}\right)^{-\alpha} \quad \textnormal{for any $t\geq s$ with $t,s\in \mathcal{I}$} \\\\
||\Phi(t,s)Q(s)||  &  \leq & \displaystyle K\left(\frac{h(s)}{h(t)}\right)^{-\alpha} \quad \textnormal{for any $s\geq t$ with $t,s\in \mathcal{I}$},
\end{array}\right.
\end{equation}
where $Q(\cdot)$ verifies $P(t)+Q(t)=I$ for any $t\in \mathcal{I}$.
\end{definition}

To the best of our knowledge, the property of uniform $h$--dichotomy has been introduced by M. Pinto
in \cite{Pinto92} and extended for a more general framework in \cite{NP94,NP95}.

Henceforth, any function $h\colon (a_{0},+\infty)\to (0,+\infty)$ having the properties stated in the above Definition will be called a \textit{growth rate}. In consequence, the Definition \ref{UHD} encompasses well known dichotomies associated to specific growth rates
such as the exponential dichotomy or the generalized exponential dichotomy (GED) which are respectively described by the growth rates
$h(t)=e^{t}$ and $h(t)=\exp(\int_{0}^{t}\gamma(\tau)\,d\tau)$ where $\gamma \colon \mathbb{R}\to \mathbb{R}$ is continuous, non negative and satisfies
\begin{displaymath}
\lim\limits_{t\to +\infty}\int_{0}^{t}\gamma(\tau)\,d\tau=+\infty \quad \textnormal{and} \quad 
\lim\limits_{t\to -\infty}\int_{t}^{0}\gamma(\tau)\,d\tau=+\infty,
\end{displaymath}
we refer the readers to \cite{Jiang,Martin,Muldowney,WXZ} for details. An illustrative example 
is given by the linear diagonal system:
\begin{displaymath}
\left(\begin{array}{c}
\dot{x}_{1}\\
\dot{x}_{2}
\end{array}\right)=
\left[\begin{array}{cc}
-\frac{1}{(1+t)\ln(1+t)}  &  0 \\
0 & \frac{1}{(1+t)\ln(1+t)}\end{array}\right]
\left(\begin{array}{c}
x_{1}\\
x_{2}
\end{array}\right) \quad \textnormal{for any $t>0$},
\end{displaymath}
where the growth rate $h\colon (0,+\infty)\to (0,+\infty)$ is $h(t)=\ln(1+t)$. In fact, we can see that the above system has a uniform $h$--dichotomy on $\mathcal{I}=(0,+\infty)$ with constants $K=\alpha=1$ and projector $P(s)=\diag\{1,0\}$ for any $s>0$.

A second illustrative example can be given by 
\begin{displaymath}
\left(\begin{array}{c}
\dot{x}_{1}\\
\dot{x}_{2}
\end{array}\right)=
\left[\begin{array}{cc}
-\frac{\alpha}{t}  &  0 \\
0 & \frac{\alpha}{t}\end{array}\right]
\left(\begin{array}{c}
x_{1}\\
x_{2}
\end{array}\right) \quad \textnormal{for any $t>0$},
\end{displaymath}
where $\alpha$ is a positive constant. In this case, the growth rate $h\colon (0,+\infty)\to (0,+\infty)$ is $h(t)=t$. It can be easily verified that the above system  has a uniform $h$--dichotomy on $\mathcal{I}=(0,+\infty)$ with constants $K=1$, $\alpha>0$ and projector $P(s)=\diag\{1,0\}$ for any $s>0$. 

From this point on, when considering the Definition \ref{UHD}, we will focus our interest on the intervals $\mathcal{I}=J$ and $\mathcal{I}=[h^{-1}(1),+\infty)$.

In order to gain a better understanding of the $h$--uniform dichotomy, let $t\mapsto x(t,t_{0},x_{0})$ be a solution of (\ref{lin}) passing through $x_{0}$ at $t=t_{0}$. Notice that, for any non trivial initial condition $x_{0}$, this solution can be decomposed as follows:
\begin{equation}
\label{splitting}
x(t,t_{0},x_{0})=\Phi(t,t_{0})x_{0}=\underbrace{\Phi(t,t_{0})P(t_{0})x_{0}}_{:=x^{+}(t,t_{0},x_{0})}+\underbrace{\Phi(t,t_{0})Q(t_{0})x_{0}}_{:=x^{-}(t,t_{0},x_{0})}.
\end{equation}

Moreover, it can be proved from (\ref{Invariance})--(\ref{HD}) that 
\begin{displaymath}
|x^{+}(t,t_{0},x_{0})|\leq K\left(\frac{h(t_{0})}{h(t)}\right)^{\alpha}  \quad \textnormal{and} \quad
\frac{1}{K}|Q(t_{0})x_{0}| \left( \frac{h(t)}{h(t_{0})}\right)^{\alpha}\leq |x^{-}(t,t_{0},x_{0})|,
\end{displaymath}
and, as $h(t)\to +\infty$ when $t\to +\infty$, we can see that any nontrivial solution of
(\ref{lin}) can be splitted in two solutions: 

\begin{itemize}
\item[$\bullet$] The solution $t \mapsto x^{+}(t,t_{0},x_{0}):=x(t,t_{0},P(t_{0})x_{0})$ is called an \textit{$h$--contraction} because converges to the origin since is upper bounded by a convergent map $t\mapsto h(t)^{-\alpha}$.

\item[$\bullet$] The solution $t \mapsto x^{-}(t,t_{0},x_{0}):=x(t,t_{0},Q(t_{0})x_{0})$ is a called an \textit{$h$--expansion} because is lowerly bounded by a divergent map $t\mapsto h(t)^{\alpha}$. 
\end{itemize}

Notice that the above decomposition (\ref{splitting}) shows that any solution of (\ref{lin}) can be splitted in two ones having a \textit{dichotomic} behavior: either $h$--contractions or $h$--expansions. This fact is behind the name $h$--dichotomy. Why are also we using the adjective \textit{uniform}? the explanation is more subtle and will require additional results that will described later.

\subsection{Main results}
The exponential dichotomy on $[0,+\infty)$ has been wi\-de\-ly studied in the literature, there exists
results devoted to admissibility, roughness, topological equivalence, characterization of its associated spectrum, etc. In particular, if the linear system (\ref{lin}) has a bounded growth on $[0,+\infty)$, namely, there exists $K_{0}>0$ and $\beta\geq 0$ such that
\begin{equation}
\label{croissance}
||\Phi(t,s)||\leq K_{0}e^{\beta(t-s)} \quad \textnormal{for any $t\geq s\geq 0$},    
\end{equation}
then it has been proved that the following properties are equivalent to the uniform exponential dichotomy on $[0,+\infty)$:

\noindent $\bullet$ The system \eqref{lin} is \textit{uniformly noncritical} on $[0,+\infty)$ if there exists $T>0$ and $\theta \in (0,1)$ such that any solution $t\mapsto x(t)$ of \eqref{lin} satisfies:
    \begin{equation}
    \label{NCP-DE}
        |x(t)|\leq \theta\sup\limits_{|u-t|\leq T}|x(u)| \quad \textnormal{for} \quad t\geq T.
    \end{equation}

\noindent $\bullet$  The system \eqref{lin} is \textit{exponentially expansive} on $[0,+\infty)$ if there exists  positive constants $L$ and $\mu$ such that if $t\mapsto x(t)$ is any solution of \eqref{lin} and $[a,b]\subset [0,+\infty)$, then for $a\leq t\leq b$
    \begin{equation}\label{e-exp}
        |x(t)|\leq L\left\{e^{-\mu(t-a)}|x(a)|+ e^{-\mu(b-t)}|x(b)|\right\}.
    \end{equation}

The above definition of noncritical uniformity has been used without being explicitly mentioned  
by W.A. Coppel in \cite[p.14]{coppel} while K.J. Palmer proposes it explicitly in \cite[Def.4]{Palmer}.
Furthermore, Palmer points out that the strong relation with a property, also called noncritical uniformity, which was previously introduced by N.N. Krasovskii in \cite[pp.57--58]{Krasovski} 
where for any $\theta \in (0,1)$ there exists $T>0$ such that (\ref{NCP-DE}) is verified.

The noncritical uniformity --in the Krasovskii's sense-- is described as a distinguished property of the solutions of nonlinear and nonautonomous systems having an equilibrium at the origin. 
In fact, it is proved \cite[Th.1]{Krasovski} that is a necessary and sufficient condition ensuring the existence of a Lyapunov function 
in a neighborhood of the origin. This property has been revisited and tailored for the linear case by J.L. Massera and J.J. Sch\"affer in \cite[p.558]{Massera} who prove its equivalence with the exponential dichotomy in an abstract context whereas the proof in a finite dimensional framework has been made by W.A. Coppel in \cite{coppel} by considering the Palmer's definition, this result has been recently extended for generalized ordinary differential equations in \cite[Th.3.8]{Bonotto}.   

As stated in \cite[p.S173]{Palmer}, the property of exponential expansiveness has its origin in the theory
of discrete dynamical systems $x_{k+1}=f(x_{k})$ where $f\colon \mathbb{R}^{n}\to \mathbb{R}^{n}$ is a diffeomorphism
and the compact set $S$ is invariant under $f$. If $S$ is also hyperbolic for $f$, then there exists positive constants $d$,
$L$ and $\lambda<1$ such that if $\{x_{k}\}_{k=a}^{b}$ and $\{y_{k}\}_{k=a}^{b}$ are orbits of $f$ with $x_{k}\in S$ such that
$|x_{k}-y_{k}|\leq d$ for $a\leq k\leq b$, then
\begin{displaymath}
|x_{k}-y_{k}|\leq L\lambda^{k-a}|x_{a}-y_{a}|+L\lambda^{b-k}|x_{b}-y_{b}|   \quad \textnormal{for $a\leq k\leq b$}.
\end{displaymath}

On the other hand, the equivalence between uniform exponential dichotomy and exponential expansiveness has been proved by K.J. Palmer
in \cite{Palmer}.

The problem of generalize the above equivalences have not been addressed for dichotomies beyond the exponential one. To the best of our knowledge, the unique effort has been made in \cite[Lemma 3.1]{WXZ} in a GED framework, where the authors introduce new definitions for bounded growth and uniform noncriticality tailored for a GED context. Moreover, they proved that these properties imply an splitting of solution as (\ref{splitting}) where the contractions and expansions are dominated by the 
GED.

As we stated above, the main contribution of the present paper is to achieve a description of the uniform $h$--dichotomy in terms of properties emulating 
(\ref{NCP-DE}) and (\ref{e-exp}). This generalization is neither straightforward nor trivial. In fact, it has been carried out by using a noticeable idea stated in \cite{PV}: the growth rate $h(\cdot)$ allows to construct a topological group $(J,\ast)$ such that $h(t\ast s)=h(t)h(s)$, which mimics a property of the exponential function as well as to construct another order in $J$. This make possible to see the right part of (\ref{HD}) as a distance between $t$ and $s$, to generalize the notion of bounded growth beyond
the additive framework and to explore its relation with the $h$--dichotomy.

\subsection{Outline} The text is organized into five sections. In section 2 we describe the group $(J,\ast)$ arising from the growth rate $h(\cdot)$, and state
some of its properties allowing to see (\ref{HD}) from another perspectives. In section 3 we define the properties of bounded growth tailored for the growth rate $h$ and study its relation with the uniform $h$--dichotomy. Section 4 provide two alternative characterizations for the uniform $h$--dichotomy, namely, a definition with constant projectors which is also equivalent to the
existence of bounded projectors in (\ref{HD}). Finally, in section 5 we introduce the notions of uniform $h$--noncriticality and
$h$--expansiveness, which encompasses (\ref{NCP-DE}) and (\ref{e-exp}) respectively. We also prove that theses properties 
are equivalent with the uniform $h$--dichotomy on $[h^{-1}(1),+\infty)$ provided that (\ref{lin}) has the bounded growth property
stated in section 3.

\section{Uniform $h$--dichotomies from a group theory perspective}

Given any growth rate $h(\cdot)$, we can define the following operation:
\begin{equation}
\label{LCI}
\begin{array}{rcl}
J \times J & \to &  J  \\
(t,s) &\mapsto &  t\ast s:=h^{-1}\left(h(t)h(s)\right).
\end{array} 
\end{equation}

The set $J$ provided with the operation $\ast$ defined above has a group
structure as stated by the following result: 
\begin{proposition}[\cite{PV}]
The pair $(J,\ast)$ is an abelian topologic group, where 
the unit element is
\begin{equation}
\label{unit}
e_{\ast}:=h^{-1}(1),
\end{equation}
and, for any $t \in J$, its inverse is defined by:
\begin{equation}
\label{inverso}
t^{\ast -1}:=h^{-1}\left(\frac{1}{h(t)}\right).
\end{equation}
\end{proposition}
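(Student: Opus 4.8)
The plan is to recognize $(J,\ast)$ as an isomorphic copy of the multiplicative group of positive reals $((0,+\infty),\cdot)$, transported via the growth rate $h(\cdot)$. Since $h\colon J \to (0,+\infty)$ is a strictly increasing homeomorphism, it is in particular a bijection, and the defining identity $h(t\ast s)=h\bigl(h^{-1}(h(t)h(s))\bigr)=h(t)h(s)$ says precisely that $h$ intertwines $\ast$ with the ordinary product. Thus every group axiom for $\ast$ can be pulled back from the corresponding (well known) axiom for multiplication on $(0,+\infty)$.

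First I would check that $\ast$ is well defined: for $t,s\in J$ we have $h(t)h(s)\in(0,+\infty)$, and since $h$ maps $J$ onto $(0,+\infty)$ its inverse is defined there, so $t\ast s=h^{-1}(h(t)h(s))\in J$. This is the one place where surjectivity of $h$, hence the full homeomorphism hypothesis, is essential. Associativity and commutativity then follow immediately by applying $h$ to both sides and using that multiplication on $(0,+\infty)$ is associative and commutative; for instance $h\bigl((t\ast s)\ast r\bigr)=h(t)h(s)h(r)=h\bigl(t\ast(s\ast r)\bigr)$, and injectivity of $h$ yields equality of the arguments. For the unit I would verify $t\ast e_\ast=h^{-1}(h(t)\cdot 1)=t$ using $h(e_\ast)=1$, and for the inverse $t\ast t^{\ast -1}=h^{-1}\bigl(h(t)\cdot h(t)^{-1}\bigr)=h^{-1}(1)=e_\ast$, which is legitimate because $h(t)>0$ guarantees $1/h(t)\in(0,+\infty)$.

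It remains to check the topological group axioms, namely continuity of $(t,s)\mapsto t\ast s$ and of $t\mapsto t^{\ast -1}$. Both reduce to continuity of $h$ and $h^{-1}$: the map $(t,s)\mapsto h(t)h(s)$ is continuous as a product of continuous functions, and postcomposing with the continuous map $h^{-1}$ yields continuity of $\ast$; likewise $t\mapsto 1/h(t)$ is continuous on $J$ (as $h$ does not vanish) and composing with $h^{-1}$ gives continuity of inversion. Equivalently, one may summarize the whole argument by noting that $h$ is simultaneously a homeomorphism and a group homomorphism onto the abelian topological group $((0,+\infty),\cdot)$, hence an isomorphism of topological groups, so $(J,\ast)$ inherits all the required structure.

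I expect no genuine obstacle here, since the statement is a direct transport of structure along $h$; the only point demanding attention is to invoke surjectivity and bicontinuity of $h$ (that is, the complete homeomorphism hypothesis) rather than mere monotone continuity, because both the well-definedness of $\ast$ and the continuity of inversion rely on $h^{-1}$ being defined and continuous on all of $(0,+\infty)$.
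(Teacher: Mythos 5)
Your proposal is correct, and it is exactly the argument behind the cited result: the paper itself does not reprove the proposition (it defers to \cite{PV}), but the identities it records immediately afterwards, beginning with $h(t\ast s)=h(t)h(s)$ and $h(s^{\ast-1})=1/h(s)$, show that the intended viewpoint is precisely yours, namely that $h$ is an isomorphism of topological groups from $(J,\ast)$ onto $((0,+\infty),\cdot)$ and all axioms are transported along it. Your attention to surjectivity and bicontinuity of $h$ for well-definedness of $\ast$ and continuity of inversion is the right point to emphasize, and nothing is missing.
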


We know that it is always difficult to adapt us to a new notational context.  However, considering future results, it will be very useful to check carefully the following identities, which are stated for any couple $t,s\in J$ and are a direct consequence of (\ref{LCI})--(\ref{inverso}):
\begin{subequations}
  \begin{empheq}[left=\empheqlbrace]{align}
&
h(t\ast s)=h(t)h(s) \quad \textnormal{and} \quad h(s^{\ast -1})=\frac{1}{h(s)}, \label{group0} \\
&
(t\ast s)^{\ast -1}=h^{-1}\left(\frac{1}{h(t\ast s)} \right)=h^{-1}\left(\frac{1}{h(t)h(s)} \right), \label{group1} \\
& t\ast s^{\ast -1}=t\ast h^{-1}\left(\frac{1}{h(s)}\right)= h^{-1}\left(\frac{h(t)}{h(s)}\right), 
\label{group2} \\
&
(t\ast s^{\ast -1})^{\ast -1}=h^{-1}\left(\frac{1}{h(t)h(s^{\ast -1})}\right)=h^{-1}\left(\frac{h(s)}{h(t)}\right)=s\ast t^{\ast-1}. \label{group3}
 \end{empheq}
 \end{subequations}

Notice that if $J=\mathbb{R}$ and $h(t)=e^{t}$, we recover the classical additive group 
$(\mathbb{R},+)$ and (\ref{group0}) becomes the identities $e^{t+s}=e^{t}e^{s}$
and $e^{-t}=1/e^{t}$.

The property (\ref{group2}) allow us to see the uniform $h$--dichotomy from the following perspective:
\begin{equation}
\label{HD2}
\left\{\begin{array}{rcl}
||\Phi(t,s)P(s)||  &  \leq & \displaystyle  Kh(t\ast s^{\ast -1})^{-\alpha} \quad \textnormal{for any $t\geq s$  with $t,s\in \mathcal{I}$}, \\\\
||\Phi(t,s)Q(s)||  &  \leq & \displaystyle K h(s\ast t^{\ast -1})^{-\alpha} \quad \textnormal{for any $s\geq t$ with $t,s\in \mathcal{I}$}.
\end{array}\right.
\end{equation}

In addition, given $s,t\in J$, by using (\ref{LCI}) we can verify that
\begin{subequations}
  \begin{empheq}[left=\empheqlbrace]{align}
  &
t \leq s \quad \textnormal{if and only if} \quad u\ast t \leq u \ast s \quad \textnormal{for any $u\in J$} \label{group4}, \\
&
t \leq s \quad \textnormal{if and only if} \quad t\ast u \leq s \ast u \quad \textnormal{for any $u\in J$} \label{group4b}, \\
&
t \leq s \quad \textnormal{if and only if} \quad  s^{\ast-1}\leq t^{\ast-1} \label{group4c}. 
\end{empheq}
\end{subequations}

The above properties allow us to introduce the following partial order $\leq_{\ast}$ on $J$:
\begin{equation}
\label{order}
s\leq_{\ast} t  \quad \textnormal{if and only if} \quad   e_{\ast} \leq t\ast s^{\ast -1}.    
\end{equation}

The abelian group $(J,\ast)$ together with the above relation have the property:
\begin{lemma}
The group $(J,\ast, \leq_{\ast})$ is totally ordered,
\end{lemma}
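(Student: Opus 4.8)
The plan is to show that the relation $\leq_{\ast}$ is nothing but the usual order $\leq$ of $\mathbb{R}$ restricted to $J$; once this identification is made, every order axiom, and in particular totality, is inherited for free from the fact that $(\mathbb{R},\leq)$ is totally ordered.

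First I would unfold the definition \eqref{order}. Given $s,t\in J$, identity \eqref{group2} lets me rewrite the comparison element as $t\ast s^{\ast -1}=h^{-1}\!\left(h(t)/h(s)\right)$, which is well defined because $h(t)/h(s)\in(0,+\infty)$, the domain of $h^{-1}$. Recalling \eqref{unit}, the defining condition $e_{\ast}\leq t\ast s^{\ast -1}$ then reads
\[
h^{-1}(1)\leq h^{-1}\!\left(\frac{h(t)}{h(s)}\right).
\]
Now I would invoke the monotonicity of the growth rate: since $h$ is a strictly increasing homeomorphism, so is $h^{-1}$, hence the displayed inequality is equivalent to $1\leq h(t)/h(s)$, that is, to $h(s)\leq h(t)$, and applying once more the strict monotonicity of $h$ this holds if and only if $s\leq t$. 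Chaining these equivalences yields
\[
s\leq_{\ast} t \quad \Longleftrightarrow \quad s\leq t \qquad \textnormal{for all } s,t\in J,
\]
so $\leq_{\ast}$ coincides with the natural order of $J$ inherited from $\mathbb{R}$.

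Finally I would conclude. Because $(\mathbb{R},\leq)$ is totally ordered, for any $s,t\in J$ we have $s\leq t$ or $t\leq s$, and by the equivalence just established this gives $s\leq_{\ast}t$ or $t\leq_{\ast}s$, which is exactly the totality of $\leq_{\ast}$; the same identification simultaneously delivers reflexivity, antisymmetry and transitivity, confirming that $\leq_{\ast}$ is a (total) order. I do not anticipate a genuine obstacle: the only points demanding care are tracking the direction of the monotonicity of $h$ and $h^{-1}$ through the definitions and checking that $h(t)/h(s)$ always lands in $(0,+\infty)$ so that $h^{-1}$ may be applied. If one also wishes to record that $(J,\ast,\leq_{\ast})$ is an \emph{ordered} group, the translation compatibility $r\leq_{\ast}s\Rightarrow r\ast u\leq_{\ast}s\ast u$ follows at once from \eqref{group4b} under the same identification.
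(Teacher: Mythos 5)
Your proof is correct and takes essentially the same approach as the paper: both establish that $\leq_{\ast}$ coincides with the usual order on $J$ by exploiting the strict monotonicity of $h$ and $h^{-1}$, deduce totality from that identification, and then obtain compatibility with $\ast$ from \eqref{group4b} (the paper phrases this as left-orderedness plus commutativity). The only difference is cosmetic: you unfold the definitions more explicitly and place the translation-invariance in a closing remark rather than in the body of the argument.
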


\begin{proof}
By using the fact that $h(\cdot)$ and $h^{-1}(\cdot)$ are increasing functions combined with (\ref{group0})
we have that $s\leq t$ if and only if $s\leq_{\ast} t$, which implies that $\leq_{\ast}$ is a total order.

By using (\ref{group4}) combined with the above statement we have that
\begin{displaymath}
t \leq_{\ast} s \quad \textnormal{implies that} \quad u\ast t \leq u \ast s \quad \textnormal{for any $u\in J$},
\end{displaymath}
and it follows that the group is left--ordered. Finally, as the group es abelian, it is also right--ordered and the Lemma follows.
\end{proof}

As the group is ordered, we can define an absolute value $|\cdot|_{\ast}\colon J\to [e_{\ast},+\infty)$:
\begin{equation}
\label{abs}
|t|_{\ast}=\left\{\begin{array}{ccr}
t &\textnormal{if}& e_{*}\leq t, \\
t^{\ast -1} &\textnormal{if}& \,\, t<e_{\ast},
\end{array}\right.
\end{equation}
and the identity $|t|_{\ast}=|t^{\ast-1}|_{\ast}$ is straightforward. Furthermore, as $(J,\ast)$ is abelian, the triangle inequality is satisfied:
\begin{equation}
|t\ast s|_{\ast} \leq |t|_{\ast} \ast |s|_{\ast},
\end{equation}
and we refer the reader to \cite[p.2]{Chiswell}. These facts allow to define a distance $d_{\ast} \colon (J,\ast) \times (J,\ast) \to   ([e_{\ast},+\infty),\ast)$
as follows:
\begin{equation}
\label{distance}
d_{\ast}(t,s):=|t\ast s^{\ast -1}|_{\ast}.
\end{equation}

It will be useful to note that, given $L>e_{\ast}$ it follows that
\begin{equation}
\label{EVA}
|t\ast s^{\ast -1}|_{\ast}\leq L  \iff L^{\ast-1}\leq  t\ast s^{\ast -1}\leq L.
\end{equation}

Finally, notice that (\ref{order}) combined (\ref{distance}) allow to see uniform $h$--dichotomy from the alternative perspective:
\begin{equation}
\label{HD3}
\left\{\begin{array}{rcl}
||\Phi(t,s)P(s)||  &  \leq & \displaystyle  Kh( d_{\ast}(t,s))^{-\alpha} \quad \textnormal{for any $t\geq s$ with $t,s\in \mathcal{I}$} \\\\
||\Phi(t,s)Q(s)||  &  \leq & \displaystyle K h(d_{\ast}(t,s))^{-\alpha} \quad \textnormal{for any $s\geq t \geq a$ with $t,s\in \mathcal{I}$}.
\end{array}\right.
\end{equation}

The above equations imply that the $h$--contractions $t\mapsto x^{+}(t,t_{0},x_{0})$
and the $h$--expansions $t\mapsto x^{-}(t,t_{0},x_{0})$ are also dependent of the distance
$d_{\ast}(t,t_{0})$, namely, the time elapsed between $t$ and $t_{0}$ but are independent 
of the initial time $t_{0}$. In this context, the adjective \textit{uniform} must be understood
with respect to the initial time.

\section{Bounded Growth}

The bounded growth properties are well known in the study of nonautonomous linear systems
and its definitions are stated in terms of the additive structure of $(\mathbb{R},+)$. Now, when considering the group $(J,\ast)$, we have:
\begin{definition}
\label{UBG}
The linear system \eqref{lin} has an  uniform bounded $h$--growth on $\mathcal{I}\subseteq J$ if for some $T>e_{\ast}$ there exists
$C_{T}\geq 1$ such that any solution $t\mapsto x(t)$ verifies 
\begin{equation}
\label{BG}    
|x(t)|\leq C_{T}|x(s)| \quad \textnormal{for any $t\in [s,s\ast T]\cap \mathcal{I}$}.
\end{equation}
\end{definition}

\begin{definition}
\label{UBD}
    The linear system \eqref{lin} has an uniform bounded $h$--decay on $\mathcal{I}\subset J$ if for some $T>e_{\ast}$ there exists $C_T\geq1$ such that any solution  $t\mapsto x(t)$ verifies
    \begin{equation}\label{BD}
        |x(t)|\leq C_T|x(s)| \quad \textnormal{for any $t\in[s\ast T^{\ast-1}, s]\cap \mathcal{I}$}.
    \end{equation}
\end{definition}

\begin{definition}
\label{UBGD}
    The linear system \eqref{lin} has an uniform bounded $h$--growth and $h$--decay on $\mathcal{I}\subset J$ if for some $T>e_{\ast}$ there exists $C_T\geq1$ such that any solution  $t\mapsto x(t)$ verifies
    \begin{equation}\label{BGD}
        |x(t)|\leq C_T|x(s)| \quad \textnormal{for any $t\in[s\ast T^{\ast-1}, s\ast T]\cap \mathcal{I}$}.
    \end{equation}
\end{definition}

An illustrative example of the property of uniform bounded $h$--growth on 
$J$ is given for the case of systems such that for some fixed $T>e_{\ast}$ it follows that:
$$
||A||_{\mathcal{S}^{1}}:=\sup\limits_{s\in J}\int_{s}^{s\ast T}||A(\tau)||\,d\tau < +\infty, 
$$
where $||\cdot||_{\mathcal{S}^{1}}$ can be seen as the Stepanov's norm with the additive structure of $(J,\ast)$. In fact, if $s\leq t \leq s\ast T$ then by Gronwall's Lemma it can be deduced that
\begin{displaymath}
\begin{array}{rcl}
|x(t)| &\leq &|x(s)|e^{\int_{s}^{t}||A(\tau)||\,d\tau} \\\\
&\leq & |x(s)|e^{\int_{s}^{s\ast T}||A(\tau)||\,d\tau}  \\\\
&\leq & e^{||A||_{\mathcal{S}^{1}}}|x(s)|,
\end{array}
\end{displaymath}
and the property is verified.

To the best of our knowledge, the Definition \ref{UBG} has been introduced by W. Coppel in \cite[pp.8--9]{coppel} 
in the classical additive case under the name of \textit{bounded growth}. 

In addition, in \cite{coppel} it has been proved that the bounded growth on $\mathcal{I}\subset J$ is equivalent to the existence of constants $K_{0}\geq 1$ and $\beta>0$ such that
\begin{displaymath}
||\Phi(t,s)||\leq K_{0}e^{\beta(t-s)} \quad \textnormal{for any $t\geq s$ with $t,s\in \mathcal{I}$}    
\end{displaymath}
and the proof of this equivalence uses a partition $\{[s+(k-1)T,s+kT)\}_{k\in \mathbb{Z}}$  of $\mathbb{R}$ by intervals of large $T$ in the classical additive sense. In order to generalize this equivalence for an arbitrary growth rate $h$ and an additive framework $(J,\ast)$ 
we will make the following partition of $\mathcal{I}$: 
$$
\mathcal{I}=\bigcup\limits_{k\in \mathbb{Z}}I_{k}  \quad \textnormal{where $I_{k}=[s\ast T^{\ast (k-1)},s\ast T^{\ast k})$},
$$
where $T^{\ast k}$ is
\begin{equation}
\label{puissance}
 T^{\ast k}=\left\{\begin{array}{rcl}
 \underbrace{T \ast \cdots \ast T}_{k-\textnormal{times}} &\textnormal{if}& k>0 \\
 e_{\ast}  &\textnormal{if}&  k=0 \\
 \underbrace{T^{\ast -1} \ast \cdots \ast T^{\ast -1}}_{k-\textnormal{times}} &\textnormal{if}& k<0,
 \end{array}\right.
\end{equation}
and, by using (\ref{abs}) combined with (\ref{distance}) and $T>e_{\ast}$, we can verify that the above partition of $J$ have intervals of large $T$:
\begin{displaymath}
d_{\ast}(s\ast T^{\ast k},s\ast T^{\ast (k-1)})=d_{\ast}(T\ast s\ast T^{\ast (k-1)},s\ast T^{\ast (k-1)})=T.  
\end{displaymath}

Moreover, by using (\ref{group0}), we can inductively deduce that: 
\begin{equation}
\label{potencia}
h(T^{\ast k})=h(T)^{k}.
\end{equation}

Now, the following results describe useful equivalences between the definitions \ref{UBG}, \ref{UBD} and \ref{UBGD} 
with characterizations of these properties in terms of the transition matrix and the growth rate $h(\cdot)$:


\begin{lemma}\label{Lem-Eq-BG}
The system \eqref{lin} has a uniform bounded $h$--growth on $\mathcal{I}$ if and only if there exist 
$K_{0}\geq 1$ and $\beta \geq 0$ such that
\begin{equation}
\label{equivalencia}
||\Phi(t,s)|| \leq K_{0}\left(\frac{h(t)}{h(s)}\right)^{\beta}=K_{0}h(t\ast s^{\ast -1})^{\beta} \quad \textnormal{for any $t\geq s$ with $t,s\in \mathcal{I}$.} 
\end{equation}
\end{lemma}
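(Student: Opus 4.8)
The plan is to prove the two implications separately, the forward estimate being routine and the converse requiring the partition of $\mathcal{I}$ introduced just before the statement. I would first record the operator--norm reformulation of Definition \ref{UBG}: since the bound $|x(t)|\leq C_{T}|x(s)|$ holds for \emph{every} solution and any solution can be written as $x(t)=\Phi(t,s)x(s)$ with $x(s)\in\mathbb{R}^{n}$ arbitrary, taking the supremum over unit initial data yields $\|\Phi(t,s)\|\leq C_{T}$ for every pair $s\leq t\leq s\ast T$ with $t,s\in\mathcal{I}$. This matrix form is what both directions will exploit.

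For the implication $(\Leftarrow)$, assume \eqref{equivalencia} holds and take $t\in[s,s\ast T]\cap\mathcal{I}$. I would estimate $|x(t)|=|\Phi(t,s)x(s)|\leq K_{0}\,h(t\ast s^{\ast-1})^{\beta}|x(s)|$ and then use that $t\leq s\ast T$ gives, via \eqref{group0}, \eqref{group2} and the monotonicity of $h$, the inequality $h(t\ast s^{\ast-1})=h(t)/h(s)\leq h(T)$. Hence $|x(t)|\leq K_{0}h(T)^{\beta}|x(s)|$, so the constant $C_{T}:=K_{0}h(T)^{\beta}$ witnesses the uniform bounded $h$--growth; note $C_{T}\geq 1$ because $K_{0}\geq 1$, $\beta\geq0$ and $h(T)>h(e_{\ast})=1$.

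The substantial direction is $(\Rightarrow)$. Fix $t\geq s$ with $t,s\in\mathcal{I}$ and let $k\geq 0$ be the unique integer with $s\ast T^{\ast k}\leq t<s\ast T^{\ast(k+1)}$, which exists and is finite because $h(s\ast T^{\ast k})=h(s)h(T)^{k}\to+\infty$ by \eqref{potencia}. Using the cocycle property I would telescope
\begin{displaymath}
\Phi(t,s)=\Phi(t,s\ast T^{\ast k})\,\Phi(s\ast T^{\ast k},s\ast T^{\ast(k-1)})\cdots\Phi(s\ast T,s),
\end{displaymath}
where the last factor advances time by at most $T$ and each of the other $k$ factors by exactly $T$ in the $\ast$--distance. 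Since all intermediate nodes $s\ast T^{\ast j}$ lie in $\mathcal{I}$ — here I use that the intervals of interest $\mathcal{I}=J$ and $\mathcal{I}=[e_{\ast},+\infty)$ are forward--closed, so that moving from $s$ by positive powers of $T$ never leaves $\mathcal{I}$ — the reformulation of the first paragraph bounds each of the $k+1$ factors by $C_{T}$, and submultiplicativity of the induced norm gives $\|\Phi(t,s)\|\leq C_{T}^{k+1}$.

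It remains to convert the exponent $k+1$ into a power of $h(t\ast s^{\ast-1})$. From $t\geq s\ast T^{\ast k}$ together with \eqref{group2} and \eqref{potencia} I obtain $h(t\ast s^{\ast-1})=h(t)/h(s)\geq h(T)^{k}$, whence $C_{T}^{k}=h(T)^{\,k\,\ln C_{T}/\ln h(T)}\leq h(t\ast s^{\ast-1})^{\,\ln C_{T}/\ln h(T)}$. Setting $K_{0}:=C_{T}\geq 1$ and $\beta:=\ln C_{T}/\ln h(T)$ — which is nonnegative since $C_{T}\geq 1$ and $h(T)>1$ — yields $\|\Phi(t,s)\|\leq C_{T}^{k+1}=C_{T}\cdot C_{T}^{k}\leq K_{0}\,h(t\ast s^{\ast-1})^{\beta}$, as desired. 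I expect the only delicate point to be precisely this last conversion: the passage from the discrete count $k$ to the continuous bound hinges on the multiplicativity identity \eqref{potencia} and on the logarithmic choice of $\beta$, and one must verify that the forward--closedness of $\mathcal{I}$ keeps every telescoping node inside $\mathcal{I}$ so that the one--step bound $C_{T}$ applies uniformly across all factors.
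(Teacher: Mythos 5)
Your proof is correct and follows essentially the same route as the paper: the easy direction is identical, and for the converse you partition $[s,t]$ into blocks of $\ast$--length $T$, iterate the one--step bound $C_{T}$, and convert the block count into the exponent $\beta=\ln C_{T}/\ln h(T)$ exactly as the paper does. The only cosmetic difference is that you telescope the transition matrix and invoke submultiplicativity, while the paper applies the bound \eqref{BG} recursively to solutions; these are equivalent, and your extra remark that the intermediate nodes $s\ast T^{\ast j}$ stay in $\mathcal{I}$ is a point the paper leaves implicit.
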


\begin{proof}
    First we assume that \eqref{equivalencia} holds, with constants $K_0\geq1$ and $\beta\geq 0$. Then, if $t\mapsto x(t)$ is a nontrivial solution of \eqref{lin} we consider a fixed $T>e_{\ast}$ and a couple $t,s\in \mathcal{I}$ such that $s\leq t\leq s*T$. By using (\ref{group4}) we have that
    the previous inequality implies  $t\ast s^{\ast -1}\leq T$, which combined with (\ref{group2}) and the fact that $h(\cdot)$ is strictly increasing leads to:
    \begin{align*}
        |x(t)| &\leq ||\Phi(t,s)||\cdot|x(s)|\\
        &\leq K_0[h(t*s^{*-1})]^{\beta}|x(s)|\\
        &\leq K_0[h(T)]^{\beta}|x(s)|,
    \end{align*}
and  \eqref{lin} has a uniform bounded $h$--growth on $\mathcal{I}$ with $C_T=K_0[h(T)]^{\beta}\geq1$.

    Conversely, let us suppose that the system \eqref{lin} has a uniform bounded $h$--growth on $\mathcal{I}$ with constants $T>e_{\ast}$ 
    and $C_{T}\geq 1$. Now, let us recall the partition $\{I_{n}\}_{n\in \mathbb{Z}}$ of $\mathcal{I}$ above defined and consider a solution 
 $t\mapsto x(t)$ of \eqref{lin}. Notice that for any $t\in \mathcal{I}$ there exist a unique $n\in \mathbb{Z}$ such that
 $t\in I_{n}$ or equivalently
 $s*T^{*(n-1)}\leq t \leq s*T^{*n}$. By using recursively the property (\ref{BG}) we can deduce:
    \begin{equation}
        |x(t)|\leq C_T^n|x(s)|.
    \end{equation}
    On the other hand, since $T^{*(n-1)}\leq t*s^{*-1}\leq T^{*n}$, we use the identities (\ref{group2}) and (\ref{potencia})
    together with the fact that $h(\cdot)$ is strictly increasing to deduce that:
    \begin{equation}
        h(T)^{n-1}\leq \dfrac{h(t)}{h(s)}\leq h(T)^n, 
    \end{equation}
    and thus,
    \begin{equation*}
        n-1\leq \dfrac{1}{\ln(h(T))}\ln\left(\dfrac{h(t)}{h(s)}\right)\leq n.
    \end{equation*}
    Finally, we get
    \begin{align*}
        |\Phi(t,s)x(s)| = |x(t)| &\leq C_T C^{n-1}_T|x(s)|\\
        & \displaystyle \leq C_T e^{[\ln(\frac{h(t)}{h(s)})\frac{1}{\ln(h(T))}]\ln(C_T)}|x(s)|\\
        &= C_T\left(\dfrac{h(t)}{h(s)}\right)^{\dfrac{\ln(C_T)}{\ln(h(T))}}|x(s)|.
    \end{align*}
    Hence, the property \eqref{equivalencia} holds with $K_0=C_T\geq1$ and $\beta=\dfrac{\ln(C_T)}{\ln(h(T))}\geq 0$. 
\end{proof}

\begin{lemma}\label{Lem-Eq-BD}
The system \eqref{lin} has a uniform bounded $h$--decay on $\mathcal{I}$ if and only if there exist 
$K_{0}\geq 1$ and $\beta\geq0$ such that
\begin{equation}
\label{equivalenciaBD}
||\Phi(t,s)|| \leq K_{0}\left(\frac{h(s)}{h(t)}\right)^{\beta} \quad \textnormal{for any $s\geq t$ with $t,s\in \mathcal{I}$.} 
\end{equation}
\end{lemma}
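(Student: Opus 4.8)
The plan is to mirror the proof of Lemma \ref{Lem-Eq-BG}, reversing the direction of the partition so that it now propagates the bounded $h$--decay towards \emph{smaller} times. I would establish the two implications separately, and in both I would exploit the translation of the order condition in Definitions \ref{UBD} into a multiplicative bound on $h(s)/h(t)$ via the identities \eqref{group0}, \eqref{group2} and \eqref{potencia}.

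For the implication \eqref{equivalenciaBD}$\Rightarrow$ bounded $h$--decay, I would fix $T>e_{\ast}$ and take $t,s\in\mathcal{I}$ with $s\ast T^{\ast -1}\leq t\leq s$. Applying the increasing map $h$ to $s\ast T^{\ast -1}\leq t$ and using \eqref{group0} gives $h(s)/h(T)=h(s\ast T^{\ast -1})\leq h(t)$, that is, $h(s)/h(t)\leq h(T)$. Since $s\geq t$, estimate \eqref{equivalenciaBD} then yields
\[
|x(t)|=|\Phi(t,s)x(s)|\leq \|\Phi(t,s)\|\,|x(s)|\leq K_{0}\left(\frac{h(s)}{h(t)}\right)^{\beta}|x(s)|\leq K_{0}h(T)^{\beta}|x(s)|,
\]
so \eqref{BD} holds with $C_{T}=K_{0}h(T)^{\beta}\geq 1$.

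For the converse I would assume bounded $h$--decay with constants $T>e_{\ast}$ and $C_{T}\geq 1$, and fix $t,s\in\mathcal{I}$ with $t\leq s$. The key is to iterate \eqref{BD} along the decreasing sequence $p_{k}:=s\ast T^{\ast -k}$, $k\geq 0$, which satisfies $p_{0}=s$ and $p_{k}=p_{k-1}\ast T^{\ast -1}$. Because $h(p_{k})=h(s)h(T)^{-k}\to 0$ (here $h(T)>1$ since $T>e_{\ast}$), there is a unique $n\geq 1$ with $p_{n}\leq t\leq p_{n-1}$; as $\mathcal{I}$ is an interval containing $t$ and $s$, every $p_{k}$ with $0\leq k\leq n-1$ lies in $[t,s]\subseteq\mathcal{I}$. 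Applying \eqref{BD} to each consecutive pair gives $|x(p_{k})|\leq C_{T}|x(p_{k-1})|$, hence $|x(p_{n-1})|\leq C_{T}^{\,n-1}|x(s)|$, and a final application between $p_{n-1}$ and $t$ produces $|x(t)|\leq C_{T}^{\,n}|x(s)|$. From $p_{n}\leq t\leq p_{n-1}$, using \eqref{group0} and \eqref{potencia}, I obtain $h(T)^{n-1}\leq h(s)/h(t)\leq h(T)^{n}$, whence $n-1\leq \ln(h(s)/h(t))/\ln h(T)\leq n$. Substituting this exponent and using $C_{T}\geq 1$ turns $C_{T}^{\,n}=C_{T}\,C_{T}^{\,n-1}$ into $C_{T}\,(h(s)/h(t))^{\ln C_{T}/\ln h(T)}$; since the resulting bound is uniform in the initial vector $x(s)$, it passes to the operator norm and gives \eqref{equivalenciaBD} with $K_{0}=C_{T}\geq 1$ and $\beta=\ln C_{T}/\ln h(T)\geq 0$.

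The only genuinely new point with respect to Lemma \ref{Lem-Eq-BG} is the bookkeeping of the downward partition: one must verify that the decreasing chain $p_{k}$ reaches $t$ in finitely many steps and that every intermediate node stays inside $\mathcal{I}$. I expect this order--convexity check, together with the sign of $\ln h(T)$, to be the main (albeit modest) obstacle; once it is settled and the identity $h(T^{\ast -k})=h(T)^{-k}$ is in hand, the conclusion follows by the same logarithmic comparison used in the growth case.
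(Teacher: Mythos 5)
Your argument is correct and follows essentially the same route as the paper's: the forward direction converts $s\ast T^{\ast-1}\leq t\leq s$ into $h(s)/h(t)\leq h(T)$ exactly as in the paper, and the converse iterates \eqref{BD} along the decreasing chain $s\ast T^{\ast -k}$ (the paper's partition written out explicitly) and closes with the same logarithmic comparison, yielding the identical constants $K_{0}=C_{T}$ and $\beta=\ln C_{T}/\ln h(T)$. Your explicit verification that the chain reaches $t$ in finitely many steps and that the intermediate nodes stay in $\mathcal{I}$ is a small point of rigor the paper leaves implicit, but it is not a different method.
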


\begin{proof}
    Suppose first that \eqref{equivalenciaBD} holds with constants $K_0\geq1$ and $\beta\geq 0$.
     Then, if $t\mapsto x(t)$ is a solution of \eqref{lin} we consider a fixed $T>e_{\ast}$ and a couple $t,s\in \mathcal{I}$ such that $s\ast T^{\ast -1}\leq t\leq s$. By (\ref{group4}) we can see that this implies  $s\ast t^{\ast -1}\leq T$, which combined with (\ref{group2}) and the fact that $h(\cdot)$ is strictly increasing leads to:
    \begin{align*}
        |x(t)| &\leq ||\Phi(t,s)||\cdot|x(s)|\\
                &\leq K_0[h(s\ast t^{\ast-1})]^{\beta}|x(s)|\\
                &\leq K_0[h(T)]^{\beta}|x(s)|,
    \end{align*}
and  this shows that \eqref{lin} has a uniform bounded $h$--decay with $C_T=K_0[h(T)]^{\beta}\geq1$.
    
    Now, let us suppose that \eqref{lin} has a uniform bounded $h$--decay with constants $T>e_{\ast}$ and $C_{T}\geq 1$. Then, for any couple $t,s\in \mathcal{I}$ with $t<s$, there exists a unique $n\in \mathbb{Z}$ such that $s\ast T^{\ast-n}\leq t\leq s\ast T^{\ast-(n-1)}$, or equivalently: 
    \begin{equation}
    \label{inegalite}
    (s\ast T^{\ast-(n-1)})\ast T^{\ast-1}\leq t \leq s\ast T^{\ast-(n-1)}.
    \end{equation}

    By applying recursively the property of uniform bounded $h$--decay on the intervals $I_{n}$, we can deduce that any nontrivial solution
    of (\ref{lin}) verifies:
    \begin{equation*}
        |x(t)|\leq C_T|x(s\ast T^{\ast-(n-1)})|\leq C^2_T|x(s\ast T^{\ast-(n-2)})|\leq\cdots \leq C^n_T|x(s)|.
    \end{equation*}
    
    Now, by using (\ref{group4}) and (\ref{inegalite}), we have that
$t\ast s^{\ast -1}\in [T^{\ast-n},T^{\ast-(n-1)}]$ which is equivalent to $s\ast t^{\ast -1}\in [T^{\ast (n-1)}, T^{\ast n}]$ since (\ref{group3}), (\ref{group4b}) and (\ref{group4c}). 
This last property combined with (\ref{potencia}) and the fact that $h(\cdot)$ is strictly increasing leads to:
    \begin{equation*}
        h(T)^{n-1}\leq h(s\ast t^{\ast-1})\leq h(T)^n,
    \end{equation*}
    which implies that
    \begin{equation*}
        n-1 \leq \dfrac{1}{\ln(h(T))}\ln\left(\dfrac{h(s)}{h(t)}\right)\leq n.
    \end{equation*}
    Hence, we obtain 
    \begin{align*}
        |\Phi(t,s)x(s)|=|x(t)| &\leq C_T C^{n-1}_T|x(s)|\\
        &\leq C_T e^{\ln\left(\frac{h(s)}{h(t)}\right)^{\frac{\ln(C_T)}{\ln(h(T))}}}|x(s)|\\
        &=C_T\left(\dfrac{h(s)}{h(t)}\right)^{\frac{\ln(C_T)}{\ln(h(T))}}|x(s)|.
    \end{align*}
 Finally, \eqref{equivalenciaBD} is verified  with $K_0=C_T\geq1$ and $\beta=\dfrac{\ln(C_T)}{\ln(h(T))}\geq 0$.  
\end{proof}

\begin{lemma}
The system \eqref{lin} has a uniform bounded $h$--growth and $h$--decay on $\mathcal{I}$ if and only if there exist 
$K_{0}\geq 1$ and $\beta\geq0$ such that
\begin{equation}
\label{equivalenciaBGD}
||\Phi(t,s)|| \leq K_{0}h(|t\ast s^{\ast-1}|_{\ast})^{\beta} \quad \textnormal{for any $t, s\in \mathcal{I}$}. 
\end{equation}
\end{lemma}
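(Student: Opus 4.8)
The plan is to reduce this statement to the two preceding lemmas, observing that the uniform bounded $h$--growth and $h$--decay property is exactly the conjunction of uniform bounded $h$--growth and uniform bounded $h$--decay, and that the single inequality (\ref{equivalenciaBGD}) already encodes both (\ref{equivalencia}) and (\ref{equivalenciaBD}) via a case analysis on the absolute value $|\cdot|_{\ast}$ defined in (\ref{abs}).

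For the forward implication I would start from Definition \ref{UBGD}: if \eqref{lin} satisfies (\ref{BGD}) with constants $T>e_{\ast}$ and $C_{T}\geq 1$, then restricting the interval $[s\ast T^{\ast-1},s\ast T]\cap\mathcal{I}$ to its upper part $[s,s\ast T]\cap\mathcal{I}$ recovers (\ref{BG}), while restricting it to its lower part $[s\ast T^{\ast-1},s]\cap\mathcal{I}$ recovers (\ref{BD}). Hence the system has uniform bounded $h$--growth and uniform bounded $h$--decay simultaneously, and Lemmas \ref{Lem-Eq-BG} and \ref{Lem-Eq-BD} supply
\begin{equation*}
||\Phi(t,s)||\leq K_{0}h(t\ast s^{\ast-1})^{\beta}\ (t\geq s), \qquad ||\Phi(t,s)||\leq K_{0}h(s\ast t^{\ast-1})^{\beta}\ (s\geq t),
\end{equation*}
with the \emph{same} constants $K_{0}=C_{T}$ and $\beta=\ln(C_{T})/\ln(h(T))$ produced by both proofs. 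I then merge these two one--sided bounds through (\ref{abs}): when $t\geq s$ one has $e_{\ast}\leq t\ast s^{\ast-1}$ by (\ref{order}), so $|t\ast s^{\ast-1}|_{\ast}=t\ast s^{\ast-1}$; when $s\geq t$ one has $t\ast s^{\ast-1}\leq e_{\ast}$, so $|t\ast s^{\ast-1}|_{\ast}=(t\ast s^{\ast-1})^{\ast-1}=s\ast t^{\ast-1}$ by the identity (\ref{group3}). In either case the right--hand side equals $K_{0}h(|t\ast s^{\ast-1}|_{\ast})^{\beta}$, which is precisely (\ref{equivalenciaBGD}).

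For the converse I would run the same case split backwards: inequality (\ref{equivalenciaBGD}) collapses to (\ref{equivalencia}) on $\{t\geq s\}$ and to (\ref{equivalenciaBD}) on $\{s\geq t\}$, again because of the two branches of $|\cdot|_{\ast}$ and the identity (\ref{group3}). Applying the converse directions of Lemmas \ref{Lem-Eq-BG} and \ref{Lem-Eq-BD} then yields uniform bounded $h$--growth and uniform bounded $h$--decay separately; for any fixed $T>e_{\ast}$ each lemma delivers the \emph{common} constant $C_{T}=K_{0}h(T)^{\beta}$. Consequently, for $t\in[s\ast T^{\ast-1},s\ast T]\cap\mathcal{I}$ I apply the growth estimate on $[s,s\ast T]$ and the decay estimate on $[s\ast T^{\ast-1},s]$ to obtain (\ref{BGD}) with this single $C_{T}$, establishing Definition \ref{UBGD}.

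I do not anticipate a genuine obstacle: the argument is essentially bookkeeping built on the two previous lemmas. The only point demanding care is the absolute--value case split together with the rewriting $(t\ast s^{\ast-1})^{\ast-1}=s\ast t^{\ast-1}$ from (\ref{group3}), which is what guarantees that the two one--sided estimates glue into the single two--sided bound (\ref{equivalenciaBGD}) with one common pair of constants rather than two unrelated pairs.
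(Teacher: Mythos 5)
Your proposal is correct and follows exactly the route the paper takes: the paper's own proof simply states that the result follows immediately from Lemmas \ref{Lem-Eq-BG} and \ref{Lem-Eq-BD}, and your write-up supplies the bookkeeping (the restriction of $[s\ast T^{\ast-1},s\ast T]$ to its two halves, the case split on $|\cdot|_{\ast}$ via (\ref{group3}), and the common constant $C_{T}=K_{0}h(T)^{\beta}$) that the authors leave implicit.
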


\begin{proof}
    The proof follows immediately from Lemmas \ref{Lem-Eq-BG} and \ref{Lem-Eq-BD}.
\end{proof}

The above results allowed to construct alternative definitions for the bounded $h$--growth properties, which are consistent
with those established in the classical additive framework. In fact, the property (\ref{equivalenciaBD}) restricted to the group $(\mathbb{R},+)$
has been introduced by K.J. Palmer in \cite{Palmer} under the name of \textit{bounded decay} while
the property (\ref{equivalenciaBGD}) has been introduced by S. Siegmund in \cite[p.253]{Siegmund-2002} also restricted to the group $(\mathbb{R},+)$ under the name \textit{bounded growth}.

\begin{remark}
\label{DCRCI}
By following the ideas of Siegmund in \cite[p.253]{Siegmund-2002} the property of uniform bounded $h$--growth and $h$--decay
is equivalent to the uniform continuity of the solutions of \eqref{lin} with respect to initial conditions, that is, 
for each $L > e_{\ast}$ and $\varepsilon>0$ there is a corresponding $\delta:=\delta(L,\varepsilon)>0$ such that for 
$\xi_{1},\xi_{2}\in \mathbb{R}^{n}$ it follows that
\begin{displaymath}
|\xi_{1}-\xi_{2}|<\delta    \Rightarrow |\Phi(t,\tau)\xi_{1}-\Phi(t,\tau)\xi_{2}|<\varepsilon \,\, \textnormal{for any $t,\tau \in \mathcal{I}$ with $L^{\ast -1}\leq t\ast \tau^{\ast-1}\leq L$}.
\end{displaymath}
\end{remark}

Last but not least, in case that a linear system (\ref{lin}) has the properties of uniform $h$--dichotomy and uniform bounded $h$--growth and $h$--decay on $\mathcal{I}$, the following result describes the relation between its corresponding constants:
\begin{lemma}
    Assume that the system \eqref{lin} verifies  \eqref{equivalenciaBGD} with constants $K_0$, $\beta$ and has a uniform $h$--dichotomy on the interval $\mathcal{I}$ with constants $K$, $\alpha$. Then it follows that $\alpha\leq \beta$.
\end{lemma}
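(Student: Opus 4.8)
The plan is to confront the uniform $h$--dichotomy estimates \eqref{HD2} with the uniform bounded $h$--growth and $h$--decay estimate \eqref{equivalenciaBGD} along a single nontrivial solution that lies in one of the invariant fibres, and then let the elapsed ``$\ast$--time'' $r:=h(t\ast s^{\ast-1})$ tend to $+\infty$. Since $P(s_{0})+Q(s_{0})=I$, at a fixed $s_{0}\in\mathcal I$ at least one of the matrices $P(s_{0})$, $Q(s_{0})$ is nonzero; I will run the argument on whichever fibre is nontrivial, using the invariance \eqref{Invariance} to keep the chosen solution inside that fibre. Observe that for $t\geq s_{0}$ one has $t\ast s_{0}^{\ast-1}\geq e_{\ast}$, so $r=h(t\ast s_{0}^{\ast-1})\geq h(e_{\ast})=1$ and, because $\mathcal I$ (equal to $J$ or to $[h^{-1}(1),+\infty)$) is unbounded above, $r\to+\infty$ as $t\to+\infty$.

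Suppose first that $w:=Q(s_{0})x_{0}\neq 0$ for some $x_{0}$ and set $\eta(t):=\Phi(t,s_{0})w$, which satisfies $\eta(t)=Q(t)\eta(t)$ by \eqref{Invariance}. Propagating backwards through $w=\Phi(s_{0},t)Q(t)\eta(t)$ and applying the second line of \eqref{HD2} (with the roles of $t$ and $s$ interchanged) gives the lower bound $|\eta(t)|\geq K^{-1}h(t\ast s_{0}^{\ast-1})^{\alpha}|w|$, which is exactly the expansive lower bound recorded after \eqref{splitting}. On the other hand \eqref{equivalenciaBGD}, applied for $t\geq s_{0}$, gives $|\eta(t)|\leq K_{0}h(t\ast s_{0}^{\ast-1})^{\beta}|w|$. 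Dividing by $|w|>0$ and writing $r=h(t\ast s_{0}^{\ast-1})$ these combine into $r^{\alpha}/K\leq K_{0}r^{\beta}$, that is $r^{\alpha-\beta}\leq KK_{0}$ for all $t\geq s_{0}$; letting $r\to+\infty$ forces $\alpha-\beta\leq 0$.

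If instead $Q(s_{0})=0$, then $P(s_{0})\neq 0$ and the symmetric computation applies to the contractive fibre: for $v:=P(s_{0})x_{0}\neq 0$ and $\xi(t):=\Phi(t,s_{0})v$, the first line of \eqref{HD2} yields the upper bound $|\xi(t)|\leq Kh(t\ast s_{0}^{\ast-1})^{-\alpha}|v|$, while the $h$--decay content of \eqref{equivalenciaBGD}, used through $v=\Phi(s_{0},t)\xi(t)$ with $s_{0}\leq t$, yields $|\xi(t)|\geq K_{0}^{-1}h(t\ast s_{0}^{\ast-1})^{-\beta}|v|$. These give $K_{0}^{-1}r^{-\beta}\leq Kr^{-\alpha}$, hence again $r^{\alpha-\beta}\leq KK_{0}$, and $r\to+\infty$ yields $\alpha\leq\beta$. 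The only delicate point is the derivation of the two lower bounds from \eqref{HD2}: they are obtained not directly but by propagating the solution backwards in time and exploiting that $\Phi(s_{0},t)$ then acts on a vector already lying in the correct fibre; once the absolute value $|s_{0}\ast t^{\ast-1}|_{\ast}=t\ast s_{0}^{\ast-1}$ is rewritten via \eqref{group3}, the monotonicity of $h$ makes the limit $r\to+\infty$ automatic and no real obstacle remains.
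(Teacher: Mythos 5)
Your proof is correct, and it reaches the same terminal inequality as the paper---namely $h(t\ast s^{\ast-1})^{\alpha-\beta}\leq K K_{0}$ for all $t\geq s$, after which letting $t\to+\infty$ forces $\alpha\leq\beta$---but by a different decomposition. The paper argues at the operator level: it writes $1=\|\Phi(t,s)\Phi(s,t)\|\leq\|\Phi(t,s)P(s)\Phi(s,t)\|+\|\Phi(t,s)Q(s)\Phi(s,t)\|$ and bounds each summand by pairing one dichotomy estimate with one bounded growth/decay estimate; this needs no case distinction and no nontriviality hypothesis on the projectors (a vanishing $P(s)$ or $Q(s)$ just kills one summand). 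You instead argue at the level of a single solution confined to an invariant fibre, extracting the expansive lower bound $|\eta(t)|\geq K^{-1}h(t\ast s_{0}^{\ast-1})^{\alpha}|w|$ (respectively the lower bound $|\xi(t)|\geq K_{0}^{-1}h(t\ast s_{0}^{\ast-1})^{-\beta}|v|$ from the decay half of \eqref{equivalenciaBGD}) by backward propagation through \eqref{Invariance}, which is what forces your case split on whether $Q(s_{0})$ or $P(s_{0})$ is nonzero. Your route is slightly more elementary---it works with vectors rather than the identity $\Phi(t,s)\Phi(s,t)=I$ and does not invoke $\|I\|=1$---and it makes the dynamical content transparent (an expanding solution cannot outrun the bounded-growth rate); the paper's route is shorter and uniform across degenerate projectors. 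Both are valid.
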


\begin{proof}
    Without loss of generality, we will assume that $t\geq s$. Then, using properties of matrix norms verifying $||I||=1$ and the property of $h$--dichotomy combined with \eqref{equivalenciaBGD}, we have
    \begin{align*}
        1 &= ||\Phi(t,s)\Phi(s,t)||\\
          &\leq ||\Phi(t,s)P(s)\Phi(s,t)|| + ||\Phi(t,s)(I-P(s))\Phi(s,t)||\\
          &\leq K h(t\ast s^{\ast-1})^{-\alpha}K_0h(t\ast s^{\ast-1})^{\beta} + K_0h(t\ast s^{\ast-1})^{\beta}Kh(t\ast s^{\ast-1})^{-\alpha}\\
          &= 2K_0K h(t\ast s^{\ast-1})^{\beta-\alpha} \quad \mbox{ for any } \quad t\geq s.
    \end{align*}
    Observe that if $\alpha>\beta$, by letting $t\to +\infty$ we will have  $1\leq 0$, which is a contradiction. This implies that $\alpha\leq\beta$ and hence the proof is completed. 
\end{proof}

We point out that this proof follows the lines of \cite[Lemma 1]{Castaneda} which proves a related result in a nonuniform exponential dichotomy context, we also refer the reader to \cite[Lemma 2.10]{Gallegos}.

\section{Alternative characterizations for the uniform $h$--dichotomy on $\mathcal{I}$}

The following result provides an alternative definition for a uniform $h$--dichotomy
on $\mathcal{I}$ in terms of constant projectors. In addition, this equivalence is well known for the exponential
case $h(t)=e^{t}$ and we refer the reader to \cite[Prop.7.14]{CLR} for details.

\begin{proposition}
\label{ProyCte}
The system \eqref{lin} has a uniform $h$--dichotomy on $\mathcal{I}$ if and only if, given a fundamental matrix $\Phi(t)$, there exists a constant projector $P$ and constants $K\geq 1$ and $\alpha>0$
such that:
\begin{equation}
\label{eq:2.2}
\left\{\begin{array}{rccr}
||\Phi(t)P\Phi^{-1}(s)||&\leq K \left( \frac{h(t)}{h(s)}\right)^{-\alpha} & \textnormal{if} & t\geq s,\quad t,s\in \mathcal{I}\\
||\Phi(t)(I-P)\Phi^{-1}(s)|| &\leq K \left( \frac{h(s)}{h(t)}\right)^{-\alpha} & \textnormal{if} & t \leq s, \quad t,s \in \mathcal{I}.
\end{array}\right.
\end{equation}
\end{proposition}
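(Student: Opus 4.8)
The plan is to move between the state--dependent projector $P(\cdot)$ of Definition \ref{UHD} and a single constant projector $P$ by conjugating with the fundamental matrix $\Phi(t)$, using the invariance identity (\ref{Invariance}) as the bridge. The key observation is that (\ref{Invariance}) is nothing but the assertion that the matrix $\Phi^{-1}(t)P(t)\Phi(t)$ is independent of $t$.

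For the direct implication, I would assume (\ref{lin}) has a uniform $h$--dichotomy with projector $P(t)$ and constants $K,\alpha$. Writing (\ref{Invariance}) as $P(t)\Phi(t)\Phi^{-1}(s)=\Phi(t)\Phi^{-1}(s)P(s)$ and multiplying on the left by $\Phi^{-1}(t)$ and on the right by $\Phi(s)$ gives $\Phi^{-1}(t)P(t)\Phi(t)=\Phi^{-1}(s)P(s)\Phi(s)$ for all $t,s\in\mathcal{I}$; hence $P:=\Phi^{-1}(t)P(t)\Phi(t)$ is a well defined constant matrix, and it is idempotent because $P(t)$ is. From $P(t)=\Phi(t)P\Phi^{-1}(t)$ one obtains $\Phi(t,s)P(s)=\Phi(t)P\Phi^{-1}(s)$, so that $||\Phi(t)P\Phi^{-1}(s)||=||\Phi(t,s)P(s)||$ and the first line of (\ref{HD}) becomes the first line of (\ref{eq:2.2}) with the same $K,\alpha$; the computation with $Q(s)=I-P(s)$ and $I-P$ is identical and yields the second line.

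For the converse, given a constant projector $P$ satisfying (\ref{eq:2.2}) I would set $P(t):=\Phi(t)P\Phi^{-1}(t)$. Idempotency of $P$ makes $P(t)$ a projector, and a one--line computation shows that both $P(t)\Phi(t,s)$ and $\Phi(t,s)P(s)$ equal $\Phi(t)P\Phi^{-1}(s)$, which is exactly (\ref{Invariance}). The same identities $\Phi(t,s)P(s)=\Phi(t)P\Phi^{-1}(s)$ and $\Phi(t,s)Q(s)=\Phi(t)(I-P)\Phi^{-1}(s)$ then transport (\ref{eq:2.2}) back to (\ref{HD}).

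I do not expect a genuine obstacle here: the argument is purely algebraic and the dichotomy constants $K$ and $\alpha$ are preserved untouched in both directions. The only point requiring a little care is the bookkeeping of on which side each projector sits, so that the identity $\Phi(t,s)P(s)=\Phi(t)P\Phi^{-1}(s)$ (and its $I-P$ analogue) comes out correctly; once this is secured, both the invariance condition and the two norm estimates follow immediately.
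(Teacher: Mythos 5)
Your argument is correct and follows essentially the same route as the paper: in the forward direction the paper fixes a reference time $t_{0}$ and sets $P=\Phi^{-1}(t_{0})P(t_{0})\Phi(t_{0})$, which is exactly the constant matrix you identify by noting that $\Phi^{-1}(t)P(t)\Phi(t)$ is $t$--independent, and the converse via $P(t)=\Phi(t)P\Phi^{-1}(t)$ is identical. The dichotomy constants are preserved in both directions, just as you claim.
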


\begin{proof}
$(\Rightarrow$: Let $\Phi(t)$
be a fundamental matrix of (\ref{lin}), as 
(\ref{lin}) has a uniform $h$--dichotomy on $\mathcal{I}$, we have the existence of a
variable projector $P(t)$ and positive constants $K,\alpha$ such that the identity (\ref{Invariance}) and the inequalities (\ref{HD}) are verified.

Let us consider a fixed $t_{0}\in \mathcal{I}$ and notice that
\begin{displaymath}
\begin{array}{rcl}
\Phi(t,s)P(s)&=&\Phi(t,t_{0})\Phi(t_{0},s)P(s)\\\\
&=&\Phi(t)\Phi^{-1}(t_{0})P(t_{0})\Phi(t_{0})\Phi^{-1}(s)\\\\
&=&\Phi(t)P\Phi^{-1}(s) \quad \textnormal{with $P=\Phi^{-1}(t_{0})P(t_{0})\Phi(t_{0})$},
\end{array}
\end{displaymath}
while the identity $\Phi(t,s)[I-P(s)]=\Phi(t)[I-P]\Phi^{-1}(s)$ can be deduced in a similar
way and the inequalities (\ref{HD}) imply (\ref{eq:2.2}) with projection $P=\Phi^{-1}(t_{0})P(t_{0})\Phi(t_{0})$.

$\Leftarrow)$ Let us assume that for any fundamental matrix $\Phi(t)$ there exists a constant projector $P$ and positive constants $K,\alpha$ such that (\ref{eq:2.2}) is verified. By defining
$$
P(t)=\Phi(t)P\Phi^{-1}(t) \quad \textnormal{for any $t\in \mathcal{I}$},
$$
we easily verify that $P(t)$ is a projection for any $t\in \mathcal{I}$, moreover
\begin{displaymath}
\begin{array}{rcl}
P(t)\Phi(t,s)&=& \Phi(t)P\Phi^{-1}(s)\\\\
&=& \Phi(t,s)\Phi(s)P\Phi^{-1}(s)\\\\
&=& \Phi(t,s)P(s),
\end{array}
\end{displaymath}
and (\ref{Invariance}) is satisfied. Finally, a direct byproduct of the above identities are
$$
\Phi(t)P\Phi^{-1}(s)=\Phi(t,s)P(s)
\quad \textnormal{and} \quad 
 \Phi(t)[I-P]\Phi^{-1}(s)=\Phi(t,s)[I-P(s)],
$$ 
then, the inequalities (\ref{eq:2.2}) imply (\ref{HD}).
\end{proof}

The next result generalizes an interesting equivalence stated without proof by Coppel in \cite[p.11]{coppel} for the exponential
dichotomy. To the best of our knowledge, the first detailed proof has been carried out by
Bonotto, Federson and Santos in \cite[Prop.3.14]{Bonotto}. We provide a straightforward generalization
to make this article self contained:

\begin{proposition}
\label{ELC} 
The linear system \eqref{lin} has a uniform $h$--dichotomy on 
$\mathcal{I}$ if and only if given a fundamental matrix $\Phi(t)$ of \eqref{lin}
there exists a projector $P$ and positive constants $K_{1},K_{2},M$ and $\alpha$ such that: 
\begin{subequations}
  \begin{empheq}{align}
  & |\Phi(t)P\xi|\leq K_{1}\left(\frac{h(t)}{h(s)}\right)^{-\alpha}|\Phi(s)P\xi| \quad \textnormal{for $t\geq s$ and $t,s\in \mathcal{I}$}  \label{ELC1}, \\
&    |\Phi(t)[I-P]\xi|\leq K_{2}\left(\frac{h(s)}{h(t)}\right)^{-\alpha}|\Phi(s)[I-P]\xi| \quad \textnormal{for $s\geq t$ and $t,s\in \mathcal{I}$}   \label{ELC2}, \\
& ||\Phi(t)P\Phi^{-1}(t)|| \leq M\quad \textnormal{for any $t\in \mathcal{I}$},  \label{ELC3} 
\end{empheq}
\end{subequations}
where $\xi$ is an arbitrary vector.
\end{proposition}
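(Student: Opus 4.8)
The plan is to prove both implications against the constant--projector characterization of Proposition~\ref{ProyCte} rather than against Definition~\ref{UHD} directly: I will show that the system \eqref{ELC1}--\eqref{ELC3} is equivalent to the estimates \eqref{eq:2.2}, after which Proposition~\ref{ProyCte} delivers the uniform $h$--dichotomy. Throughout, the only tools besides the hypotheses are the algebraic identities $P^{2}=P$, $(I-P)^{2}=I-P$ and $\Phi^{-1}(s)\Phi(s)=I$, together with the variable projector $P(t):=\Phi(t)P\Phi^{-1}(t)$.

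For the forward implication I assume \eqref{eq:2.2} holds with a constant projector $P$ and constants $K,\alpha$. First I would write, for $t\geq s$,
\begin{displaymath}
\Phi(t)P\xi=\Phi(t)P\Phi^{-1}(s)\,\Phi(s)P\xi,
\end{displaymath}
so that submultiplicativity of the induced norm and the first line of \eqref{eq:2.2} immediately yield \eqref{ELC1} with $K_{1}=K$; the same manipulation applied to $I-P$ and the second line of \eqref{eq:2.2} gives \eqref{ELC2} with $K_{2}=K$. Finally, taking $s=t$ in the first estimate of \eqref{eq:2.2} produces $\|\Phi(t)P\Phi^{-1}(t)\|\leq K$, which is exactly \eqref{ELC3} with $M=K$. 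This direction is routine.

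The reverse implication is where the argument has content. Assuming \eqref{ELC1}--\eqref{ELC3}, to bound $\|\Phi(t)P\Phi^{-1}(s)\|$ for $t\geq s$ I would fix an arbitrary $\eta\in\mathbb{R}^{n}$, set $\xi=\Phi^{-1}(s)\eta$, and observe that $\Phi(s)P\xi=P(s)\eta$. Then \eqref{ELC1} becomes
\begin{displaymath}
|\Phi(t)P\Phi^{-1}(s)\eta|\leq K_{1}\left(\frac{h(t)}{h(s)}\right)^{-\alpha}|P(s)\eta|,
\end{displaymath}
and \eqref{ELC3} bounds $|P(s)\eta|\leq M|\eta|$; taking the supremum over $|\eta|\leq 1$ gives the first line of \eqref{eq:2.2} with constant $K_{1}M$. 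The analogous computation with $I-P$ and \eqref{ELC2} yields the second line, but here $\Phi(s)(I-P)\xi=(I-P(s))\eta$, so I must control $\|I-P(s)\|$ rather than $\|P(s)\|$.

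The main obstacle is precisely this last point: hypothesis \eqref{ELC3} only bounds the norm of the projector $P(t)=\Phi(t)P\Phi^{-1}(t)$, not of its complement. The resolution is that $I-P(t)=\Phi(t)(I-P)\Phi^{-1}(t)$, so the triangle inequality together with $\|I\|=1$ gives $\|I-P(t)\|\leq 1+M$ for all $t\in\mathcal{I}$; this converts the fibre--wise estimate \eqref{ELC2} into the operator bound $\|\Phi(t)(I-P)\Phi^{-1}(s)\|\leq K_{2}(1+M)\left(h(s)/h(t)\right)^{-\alpha}$ for $t\leq s$. Choosing $K=\max\{1,K_{1}M,K_{2}(1+M)\}$ then establishes \eqref{eq:2.2}, and Proposition~\ref{ProyCte} concludes that \eqref{lin} has a uniform $h$--dichotomy on $\mathcal{I}$. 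It is worth emphasising that \eqref{ELC3} is indispensable: the growth estimates \eqref{ELC1}--\eqref{ELC2} alone describe only the behaviour along the invariant fibres and carry no information about the norm of the projection, which is exactly what is needed to pass from vector estimates to operator--norm estimates.
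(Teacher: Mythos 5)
Your proposal is correct and follows essentially the same route as the paper: both directions are reduced to the constant--projector estimates \eqref{eq:2.2} of Proposition~\ref{ProyCte}, with the forward implication obtained by inserting $\nu=\Phi(s)P\xi$ (resp.\ $\Phi(s)[I-P]\xi$) and the converse by taking $\xi=\Phi^{-1}(s)\eta$ and using \eqref{ELC3} to bound $\|P(s)\|\leq M$ and $\|I-P(s)\|\leq \|I\|+M$. The resulting constants ($K_{1}M$ and $K_{2}(\|I\|+M)$) match the paper's.
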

 
\begin{proof}
Firstly, let us assume that (\ref{lin}) has a uniform $h$--dichotomy on $\mathcal{I}$. Then by 
Proposition \ref{ProyCte}, for any fundamental matrix $\Phi(t)$ there exist a projector 
$P$ and positive constants $K,\alpha$ such that (\ref{eq:2.2}) is verified. Now, by using the first inequality of (\ref{eq:2.2}) and considering any $\nu\in \mathbb{R}^{n}\setminus \{0\}$,
we have that
$$
|\Phi(t)P\Phi^{-1}(s)\nu|\leq K\left(\frac{h(t)}{h(s)}\right)^{-\alpha}|\nu|. 
$$
Then the inequality (\ref{ELC1}) follows with $K_{1}=K$ and $\nu=\Phi(s)P\xi$. The inequality (\ref{ELC2})
can be deduced in a similar way while (\ref{ELC3}) follows by considering $s=t$ in the inequality (\ref{eq:2.2}).

Secondly, let us assume that the inequalities (\ref{ELC1}),(\ref{ELC2}) and (\ref{ELC3}) are verified. Notice that
if $t\geq s$, we can consider $\xi=\Phi^{-1}(s)P\nu$ with $\nu\neq 0$ in (\ref{ELC1}) and use (\ref{ELC3}) to deduce that: 
\begin{displaymath}
\begin{array}{rcl}
|\Phi(t)P\Phi^{-1}(s)\nu| &\leq& \displaystyle K_{1}\left(\frac{h(t)}{h(s)}\right)^{-\alpha}|\Phi(s)P\Phi^{-1}(s)\nu| \\\\
&\leq & \displaystyle MK_{1}\left(\frac{h(t)}{h(s)}\right)^{-\alpha}|\nu|,
\end{array}
\end{displaymath}
which leads to $||\Phi(t)P\Phi^{-1}(s)||\leq MK_{1}\left(\frac{h(t)}{h(s)}\right)^{-\alpha}$ for any $t\geq s$ with 
$t,s \in \mathcal{I}$.

On the other hand, if $s\geq t$, we can consider $\xi=\Phi^{-1}(s)[I-P]\nu$ with $\nu\neq 0$ in (\ref{ELC2}) and use again (\ref{ELC3}) to deduce that: 
\begin{displaymath}
\begin{array}{rcl}
|\Phi(t)[I-P]\Phi^{-1}(s)\nu| &\leq& \displaystyle  K_{2}\left(\frac{h(s)}{h(t)}\right)^{-\alpha}|\Phi(s)[I-P]\Phi^{-1}(s)\nu| \\\\
&\leq & \displaystyle (||I||+M)K_{2}\left(\frac{h(s)}{h(t)}\right)^{-\alpha}|\nu|,
\end{array}
\end{displaymath}
which leads to $||\Phi(t)[I-P]\Phi^{-1}(s)||\leq (||I||+M)K_{2}\left(\frac{h(s)}{h(t)}\right)^{-\alpha}$ for any $t\leq s$ with 
$t,s \in \mathcal{I}$.

Finally, by gathering the above estimations, we obtain the inequalities (\ref{eq:2.2}) with $K=\max\{M K_{1}, (||I||+M)K_{2}\}$ and the 
uniform $h$--dichotomy follows from Proposition \ref{ELC}.

\end{proof}

We can observe that \eqref{ELC3} in Proposition \ref{ELC} combined with the proof of Proposition \ref{ProyCte}
implies that the variable projector $P(t)$ is uniformly bounded for any $t\in\mathcal{I}$.  

The next result has been stated by Coppel in \cite[pp.11--12]{coppel} in a context of exponential dichotomy.
\begin{lemma}\label{C12-C3}
If the linear system \eqref{lin} has a uniform bounded $h$--growth on $\mathcal{I}$ and there exists a non null projector
$P$ and positive constants $K_{1},K_{2}$ and $\alpha$ such that \eqref{ELC1}--\eqref{ELC2} are verified, then
there exists $M>0$ such that \eqref{ELC3} is also verified.
\end{lemma}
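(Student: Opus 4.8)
The plan is to bound the variable projector $P(t):=\Phi(t)P\Phi^{-1}(t)$ directly, the geometric content being that bounded growth forbids the ranges of $P(t)$ and $I-P(t)$ from becoming arbitrarily close to parallel. Fix $t\in\mathcal{I}$ and an arbitrary $x\in\mathbb{R}^{n}$, and put $\xi:=\Phi^{-1}(t)x$, so that $x=\Phi(t)\xi$. Writing $u(\tau):=\Phi(\tau)P\xi$ and $v(\tau):=\Phi(\tau)[I-P]\xi$, each solution splits as $\Phi(\tau)\xi=u(\tau)+v(\tau)$; in particular $x=u(t)+v(t)$ and $P(t)x=u(t)$. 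Since $|u(t)|\le|x|+|v(t)|$, it suffices to exhibit a constant $c>0$, independent of both $t$ and $x$, with $|v(t)|\le c\,|x|$: this gives $|P(t)x|=|u(t)|\le(1+c)|x|$, and as $x$ is arbitrary we obtain \eqref{ELC3} with $M=1+c$.

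First I would replace the bounded growth hypothesis by its transition--matrix form from Lemma~\ref{Lem-Eq-BG}: there are $K_{0}\ge1$ and $\beta\ge0$ with $|x(\tau)|\le K_{0}(h(\tau)/h(t))^{\beta}|x(t)|$ whenever $\tau\ge t$. Then I push forward to a time $\tau\ge t$ and abbreviate $\lambda:=h(\tau)/h(t)\ge1$. The contraction estimate \eqref{ELC1} gives $|u(\tau)|\le K_{1}\lambda^{-\alpha}|u(t)|$, while the expansion estimate \eqref{ELC2} rearranges to $|v(\tau)|\ge K_{2}^{-1}\lambda^{\alpha}|v(t)|$. Feeding these into the reverse triangle inequality $|x(\tau)|\ge|v(\tau)|-|u(\tau)|$ together with the bounded growth bound yields the single scalar inequality
\begin{equation*}
\frac{1}{K_{2}}\lambda^{\alpha}|v(t)|-K_{1}\lambda^{-\alpha}|u(t)|\le K_{0}\lambda^{\beta}|x(t)|.
\end{equation*}
Using $|u(t)|\le|x(t)|+|v(t)|$ and collecting the $|v(t)|$ terms transforms this into
\begin{equation*}
\Bigl(\tfrac{1}{K_{2}}\lambda^{\alpha}-K_{1}\lambda^{-\alpha}\Bigr)|v(t)|\le\bigl(K_{0}\lambda^{\beta}+K_{1}\lambda^{-\alpha}\bigr)|x(t)|.
\end{equation*}

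To conclude I would fix once and for all $\lambda_{0}:=\max\{1,(2K_{1}K_{2})^{1/(2\alpha)}\}$, so that the bracket on the left is bounded below by $\tfrac{1}{2K_{2}}\lambda_{0}^{\alpha}>0$, and take $\tau:=h^{-1}(\lambda_{0}h(t))$. Since $\mathcal{I}$ is unbounded above (it equals $J$ or $[h^{-1}(1),+\infty)$) and $\lambda_{0}\ge1$, this $\tau$ belongs to $\mathcal{I}$ and satisfies $\tau\ge t$, so all three inequalities above are legitimate. Dividing then gives $|v(t)|\le c\,|x(t)|$ with $c:=2K_{2}\bigl(K_{0}\lambda_{0}^{\beta}+K_{1}\lambda_{0}^{-\alpha}\bigr)\lambda_{0}^{-\alpha}$, a constant depending only on $K_{0},K_{1},K_{2},\alpha,\beta$ and hence uniform in $t$.

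I expect the only real obstacle to be the choice of $\lambda_{0}$. It must be taken large enough to dominate the contribution $K_{1}\lambda^{-\alpha}|u(t)|$ of the contracting component, yet it must be frozen rather than sent to $+\infty$: letting $\lambda\to+\infty$ would be fatal when $\beta>\alpha$, since the bounded growth term $K_{0}\lambda^{\beta}$ would then overwhelm the expansion term $K_{2}^{-1}\lambda^{\alpha}$. (Note that $\beta\le\alpha$ is \emph{not} available here, as it is proved elsewhere only under a full $h$--dichotomy.) Keeping $\lambda=\lambda_{0}$ fixed sidesteps this issue and produces a $t$--independent constant, completing the proof.
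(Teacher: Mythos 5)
Your proof is correct and follows essentially the same route as the paper's: push the solution forward by a fixed step ($\tau=t\ast T$ with $h(T)=\lambda_{0}$), play the expansion of the $[I-P]$--component from \eqref{ELC2} against the contraction from \eqref{ELC1} and the cap $K_{0}\lambda_{0}^{\beta}$ from bounded growth, and choose the step just large enough that the expansion coefficient dominates, keeping it frozen rather than letting it tend to infinity. The only difference is presentational: you run the estimate on individual solution vectors $u(\tau),v(\tau)$ and conclude via the reverse triangle inequality, whereas the paper carries out the same comparison at the level of the operator norms $\sigma(t)=\|\Phi(t)P\Phi^{-1}(t)\|$ and $\rho(t)=\|\Phi(t)[I-P]\Phi^{-1}(t)\|$.
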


\begin{proof}
Firstly, let us define the positive maps:
\begin{equation}
\label{PM1}
\sigma(t)=||\Phi(t)P\Phi^{-1}(t)||  \quad \textnormal{and} \quad \rho(t)=||\Phi(t)[I-P]\Phi^{-1}(t)|| \quad \textnormal{for any $t\in \mathcal{I}$}.
\end{equation}

By using triangular inequality, it can be proved that 
\begin{displaymath}
\rho(t)-\sigma(t)\leq ||I|| \quad \textnormal{and} \quad 
\sigma(t)-\rho(t)\leq ||I||,
\end{displaymath}
which implies that
\begin{equation}
\label{PM2}
|\sigma(t)-\rho(t)|\leq ||I|| \quad \textnormal{for any $t\in \mathcal{I}$}.
\end{equation}

Now, let us consider $T>e_{\ast}$ and $\eta \neq 0$. By using (\ref{ELC1}) and (\ref{ELC2}) with $\xi=\Phi^{-1}(t)\eta$, it follows that
\begin{displaymath}
\begin{array}{rcl}
|\Phi(t\ast T)P\Phi^{-1}(t)\eta| &\leq & \displaystyle K_{1}\left(\frac{h(t\ast T)}{h(t)}\right)^{-\alpha}|\Phi(t)P\Phi^{-1}(t)\eta|\\\\
&=&
\displaystyle K_{1}h(t\ast T \ast t^{\ast -1})^{-\alpha}|\Phi(t)P\Phi^{-1}(t)\eta| \\\\
&=& 
\displaystyle K_{1}h(T)^{-\alpha}|\Phi(t)P\Phi^{-1}(t)\eta|,
\end{array}
\end{displaymath}
and
\begin{displaymath}
\begin{array}{rcl}
|\Phi(t)[I-P]\Phi^{-1}(t)\eta| &\leq & \displaystyle K_{2}\left(\frac{h(t\ast T)}{h(t)}\right)^{-\alpha}|\Phi(t\ast T)[I-P]\Phi^{-1}(t)\eta|\\\\
&=&
\displaystyle K_{2}h(t\ast T \ast t^{\ast -1})^{-\alpha}|\Phi(t\ast T)[I-P]\Phi^{-1}(t)\eta| \\\\
&=& 
\displaystyle K_{2}h(T)^{-\alpha}|\Phi(t\ast T)[I-P]\Phi^{-1}(t)\eta|,
\end{array}
\end{displaymath}
which leads to 
\begin{equation}
\label{PM3}
||\Phi(t\ast T)P\Phi^{-1}(t)||\leq K_{1}h(T)^{-\alpha}\sigma(t).
\end{equation}
and
\begin{equation}
\label{PM4}
K_{2}^{-1}h(T)^{\alpha}\rho(t) \leq ||\Phi(t\ast T)[I-P]\Phi^{-1}(t)||.
\end{equation}

Let us construct the strictly increasing map $T\mapsto\gamma(T)=K_{2}^{-1}h(T)^{\alpha}-K_{1}h(T)^{-\alpha}$
and choose $T$ big enough such that $\gamma(T)>0$. Now, notice that the above estimations imply:
\begin{displaymath}
\begin{array}{rcl}
\gamma(T) &\leq &  ||\rho^{-1}(t)\Phi(t\ast T)[I-P]\Phi^{-1}(t)||-||\sigma^{-1}(t)\Phi(t\ast T)P\Phi^{-1}(t)|| \\\\
&\leq & ||\rho^{-1}(t)\Phi(t\ast T)[I-P]\Phi^{-1}(t)+\sigma^{-1}(t)\Phi(t\ast T)P\Phi^{-1}(t)|| \\\\
&\leq & ||\rho^{-1}(t)\Phi(t\ast T,t)\Phi(t)[I-P]\Phi^{-1}(t)+\sigma^{-1}(t)\Phi(t\ast T,t)\Phi(t)P\Phi^{-1}(t)|| \\\\
&\leq & ||\Phi(t\ast T,t)||\, ||\rho^{-1}(t)\Phi(t)[I-P]\Phi^{-1}(t)+\sigma^{-1}(t)\Phi(t)P\Phi^{-1}(t)||
\end{array}
\end{displaymath}

Moreover, we know by hypothesis that (\ref{lin}) has 
a uniform bounded $h$--growth on $\mathcal{I}$. That is there exist $K_{0}\geq 1$
and $\beta\geq 0$ such that (\ref{equivalencia}) is verified and consequently
\begin{displaymath}
||\Phi(t\ast T,t)||\leq K_{0}\left(\frac{h(t\ast T)}{h(t)}\right)^{\beta}=
K_{0}h(T)^{\beta},
\end{displaymath}
since (\ref{group0}).  Now, by the above inequalities combined with (\ref{PM2}). we have that:
\begin{displaymath}
\begin{array}{rcl}
\gamma(T)K_{0}^{-1}h(T)^{-\beta} &\leq & ||\rho^{-1}(t)\Phi(t)[I-P]\Phi^{-1}(t)+\sigma^{-1}(t)\Phi(t)P\Phi^{-1}(t)|| \\\\
& = & ||\rho^{-1}(t)I+(\sigma^{-1}(t)-\rho^{-1}(t))\Phi(t)P\Phi^{-1}(t)|| \\\\
& = &  \rho^{-1}(t)||I||+|\sigma^{-1}(t)-\rho^{-1}(t)|\sigma(t) \\\\
& = &  \rho^{-1}(t)\left(||I||+|\rho(t)-\sigma(t)|\right) \\\\
&\leq & 2||I||\rho^{-1}(t),
\end{array}
\end{displaymath}
and we have that:
$$
\rho(t)\leq \frac{2||I||K_{0}h(T)^{\beta}}{\gamma(T)}.
$$
Finally by (\ref{PM2}) we have that
$$
||\Phi(t)P\Phi^{-1}(t)|| \leq ||I||\left( 1+  \frac{2K_{0}h(T)^{\beta}}{\gamma(T)}\right),
$$
and the Lemma follows since the above estimation is valid for any $t\in \mathcal{I}$.
\end{proof}

The last result provides an interesting property of the $h$--dichotomy on $\mathcal{I}=[e_{\ast},+\infty)$
and follow the lines of \cite[p.13]{coppel}:
\begin{lemma}\label{Ent-int}
    If the linear system \eqref{lin} has a uniform $h$--dichotomy on a subinterval $[T_{1}, +\infty)$ with $T_1>e_{\ast}$, then it also has a uniform $h$--dichotomy on $[e_{\ast}, +\infty)$ with the same projection $P(t)$ and the same constant $\alpha>0$. 
\end{lemma}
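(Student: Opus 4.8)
The plan is to keep the same projector $P(t)$ and the same exponent $\alpha$, and to absorb the behaviour on the compact block $[e_{\ast},T_{1}]$ into an enlarged constant. Since the invariance identity \eqref{Invariance} holds on all of $J$, the variable projector is already defined on $[e_{\ast},+\infty)$; alternatively one sets $P(t)=\Phi(t,T_{1})P(T_{1})\Phi(T_{1},t)$ for $t\in[e_{\ast},T_{1}]$, which is consistent with \eqref{Invariance}. Because $t\mapsto A(t)$ is continuous, the transition matrix is jointly continuous, so on the compact square $[e_{\ast},T_{1}]\times[e_{\ast},T_{1}]$ all of the quantities $\|\Phi(t,s)P(s)\|$, $\|\Phi(t,s)Q(s)\|$, $\|\Phi(t,T_{1})\|$ and $\|\Phi(T_{1},s)\|$ are bounded by a single constant $M\geq 1$ depending only on $T_{1}$. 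I will also use repeatedly that $h(e_{\ast})=h(h^{-1}(1))=1$, so that $1\leq h(s)\leq h(T_{1})$ whenever $s\in[e_{\ast},T_{1}]$.

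First I would verify the contraction estimate $\|\Phi(t,s)P(s)\|\leq K'(h(t)/h(s))^{-\alpha}$ for all $t\geq s$ in $[e_{\ast},+\infty)$, splitting into three cases. If $T_{1}\leq s\leq t$ the bound is exactly the hypothesis with $K'=K$. If $e_{\ast}\leq s\leq t\leq T_{1}$ then $(h(t)/h(s))^{-\alpha}\geq h(T_{1})^{-\alpha}$, so the compactness bound $\|\Phi(t,s)P(s)\|\leq M$ gives the estimate with $K'=Mh(T_{1})^{\alpha}$. The only delicate case is $e_{\ast}\leq s< T_{1}\leq t$: here I factor $\Phi(t,s)=\Phi(t,T_{1})\Phi(T_{1},s)$ and push the projector across $T_{1}$ via \eqref{Invariance} to obtain $\Phi(t,s)P(s)=\bigl(\Phi(t,T_{1})P(T_{1})\bigr)\Phi(T_{1},s)$; the first factor is controlled by the dichotomy on $[T_{1},+\infty)$ and the second by $M$, yielding $\|\Phi(t,s)P(s)\|\leq MK\,(h(t)/h(T_{1}))^{-\alpha}$. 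Since $(h(t)/h(T_{1}))^{-\alpha}=(h(T_{1})/h(s))^{\alpha}(h(t)/h(s))^{-\alpha}$ and $h(s)\geq 1$, the correction factor $(h(T_{1})/h(s))^{\alpha}$ is at most $h(T_{1})^{\alpha}$, so the desired form follows with $K'=MKh(T_{1})^{\alpha}$.

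The expansion estimate $\|\Phi(t,s)Q(s)\|\leq K'(h(s)/h(t))^{-\alpha}$ for $s\geq t$ is handled symmetrically. The cases $T_{1}\leq t\leq s$ and $e_{\ast}\leq t\leq s\leq T_{1}$ are respectively the hypothesis and the compactness bound, exactly as above. In the mixed case $e_{\ast}\leq t<T_{1}\leq s$ I again factor $\Phi(t,s)=\Phi(t,T_{1})\Phi(T_{1},s)$, use \eqref{Invariance} to write $\Phi(t,s)Q(s)=\Phi(t,T_{1})\bigl(\Phi(T_{1},s)Q(s)\bigr)$, bound the second factor by the dichotomy on $[T_{1},+\infty)$ and the first by $M$, and convert $(h(s)/h(T_{1}))^{-\alpha}=(h(T_{1})/h(t))^{\alpha}(h(s)/h(t))^{-\alpha}$ at the cost of the bounded factor $(h(T_{1})/h(t))^{\alpha}\leq h(T_{1})^{\alpha}$, using $h(t)\geq 1$.

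Finally, taking $K'$ to be the maximum of the constants produced in the six cases (all sharing the original exponent $\alpha$ and projector $P(t)$) gives a single pair $(K',\alpha)$ for which the inequalities \eqref{HD} hold on $[e_{\ast},+\infty)$, which proves the lemma. I expect the main obstacle to be the bookkeeping in the two mixed cases: one must route the projector across $T_{1}$ through \eqref{Invariance} and then rewrite the truncated ratio $h(t)/h(T_{1})$ (resp.\ $h(s)/h(T_{1})$) in terms of $h(t)/h(s)$, where the normalisation $h(e_{\ast})=1$ is precisely what keeps the correction factor bounded by $h(T_{1})^{\alpha}$.
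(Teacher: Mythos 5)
Your proof is correct and follows essentially the same route as the paper: split into the three cases $T_1\leq s\leq t$, $e_{\ast}\leq s\leq t\leq T_1$ and the mixed case, factor the transition matrix through $T_1$, bound everything on the compact block, and absorb the correction factor $h(T_1)^{\alpha}$ using $h(s)\geq h(e_{\ast})=1$. The only cosmetic difference is that the paper obtains the bound on $[e_{\ast},T_1]^2$ via Gronwall's inequality ($N=e^{\int_{e_{\ast}}^{T_1}\|A(u)\|\,du}$) where you invoke continuity and compactness; the two are interchangeable.
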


\begin{proof}
    Let us consider, without loss of generality, a unitary matrix. Then, given a unitary norm, the number
    \begin{equation*}
        N=e^{\int_{e_{\ast}}^{T_1}||A(u)||du}>1
    \end{equation*}
    is well defined an so
    \begin{equation}\label{NBound}
        ||\Phi(t,s)||\leq N \quad \textnormal{ for } e_{\ast}\leq s,\, t\leq T_1.
    \end{equation}
    By using \eqref{NBound} combined with the property of uniform $h$--dichotomy on $[T_1,+\infty)$ we can obtain the following estimations:
    \begin{itemize}
        \item If $e_{\ast}\leq s\leq T_1\leq t$ we have that

        \begin{align*}
            ||\Phi(t,s)P(s)|| &= ||\Phi(t,T_1)\Phi(T_1,s)P(s)||\\
                              &\leq ||\Phi(t,T_1)P(T_1)||\,||\Phi(T_1,s)||\\
                              &\leq NK\left(\frac{h(t)}{h(T_1)}\right)^{-\alpha}\\
                              &\leq NK h(T_1)^{\alpha}\left(\frac{h(t)}{h(s)}\right)^{-\alpha},
        \end{align*}
        where the last inequality follows from the fact that $h(s)^{\alpha}\geq1$.

        \item If $e_{\ast}\leq s\leq t\leq T_1$ we have that 
        \begin{align*}
            ||\Phi(t,s)P(s)|| &= ||\Phi(t,T_1)\Phi(T_1,s)P(s)||\\
                              &\leq ||\Phi(t,T_1)P(T_1)||\, ||\Phi(T_1,s)||\\
                              &\leq N ||\Phi(t,T_1)\Phi(T_1,T_1)P(T_1)||\\
                              &\leq N^2K= N^2K h(T_1)^{\alpha}h(T_1)^{-\alpha}\\
                              &\leq N^2 Kh(T_1)^{\alpha}\left(\frac{h(t)}{h(s)}\right)^{-\alpha},
        \end{align*}
        where the last inequality follows from the fact that $h(t)\leq h(T_1)$ and  $h(s)^{\alpha}\geq1$.
    \end{itemize}
    Taking $\Tilde{K}=N^2Kh(T_1)^{\alpha}$ we get that
    \begin{equation*}
        ||\Phi(t,s)P(s)||\leq \Tilde{K} \left(\frac{h(t)}{h(s)}\right)^{-\alpha} \quad \textnormal{ for any } e_{\ast}\leq s\leq t.
    \end{equation*}
    The other inequality of the uniform $h$--dichotomy can be proved similarly and the Lemma follows. 
\end{proof}

\section{Alternative characterizations for the uniform $h$--dichotomy on $[e_{\ast},+\infty)$}
This section contains the main results of this article. We firstly introduce two properties: the  \textit{uniformly $h$--noncriticality} and the \textit{$h$--expansivity},
which becomes (\ref{NCP-DE}) and (\ref{e-exp}) when $h(t)=e^{t}$. We prove that these properties are equivalent
with the uniform $h$--dichotomy on $[e_{\ast},+\infty)$ provided that the system (\ref{lin})
has a uniform bounded $h$--growth on $[e_{\ast},+\infty)$.

\begin{definition}
The system \eqref{lin} is uniformly $h$--noncritical if there exists $T>e_{\ast}$
and $\theta\in (0,1)$ such that any solution $t\mapsto x(t)$ of \eqref{lin} satisfies
    \begin{equation}
    \label{NCP}
        |x(t)|\leq \theta\sup\limits_{|u\ast t^{\ast-1}|_{\ast}\leq T}|x(u)|=  \theta\sup\{|x(u)|\colon T^{\ast-1}<u\ast t^{\ast-1}\leq T \}\,\, \textnormal{ for } t\geq T.
    \end{equation}
\end{definition}

Notice that the identity in the above Definition is a consequence of the equivalence (\ref{EVA}).
This identity will be used on several occasions later on.

\begin{definition}
    We say that the system \eqref{lin} is $h$--expansive on an interval $\mathcal{I}$ if there exists  positive constants $L$ and $\beta$ such that if $t\mapsto x(t)$ is any solution of \eqref{lin} and $[a,b]\subset \mathcal{I}$, then for $a\leq t\leq b$
    \begin{equation}\label{h-exp}
        |x(t)|\leq L[h(t\ast a^{\ast-1})^{-\beta}|x(a)| + h(b\ast t^{\ast-1})^{-\beta}|x(b)|].
    \end{equation}
\end{definition}

\subsection{Uniform $h$--dichotomy and uniform $h$--noncriticality: preliminary results }

\begin{lemma}
    If the system \eqref{lin} has a uniform $h$--dichotomy on $[e_{\ast},+\infty)$, then is uniformly $h$--noncritical on
    $[e_{\ast},+\infty)$.
\end{lemma}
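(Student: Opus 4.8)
The plan is to exploit the splitting \eqref{splitting} together with the two inequalities in the form \eqref{HD2}, taking as comparison points the two endpoints of the $h$--window around $t$, namely $s_{-}:=t\ast T^{\ast-1}$ and $s_{+}:=t\ast T$. The key observation is that, for $t\geq T$, both points lie in $[e_{\ast},+\infty)$ (indeed $s_{-}\geq T\ast T^{\ast-1}=e_{\ast}$ and $s_{+}\geq t\geq e_{\ast}$), and both satisfy $|s_{\pm}\ast t^{\ast-1}|_{\ast}=T$, so by \eqref{EVA} they belong to the supremum window $\{u:|u\ast t^{\ast-1}|_{\ast}\leq T\}$ appearing in \eqref{NCP}.

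First I would fix the dichotomy constants $K\geq 1$, $\alpha>0$ and the projector $P(t)$ on $[e_{\ast},+\infty)$, and then choose $T>e_{\ast}$ large enough that $\theta:=2K\,h(T)^{-\alpha}<1$; this is possible because $h$ is an increasing homeomorphism onto $(0,+\infty)$, so $h(T)\to+\infty$ as $T\to+\infty$. Next, for an arbitrary solution $x(\cdot)$ and any $t\geq T$, I would use the invariance \eqref{Invariance} to write the contractive part as $x^{+}(t)=P(t)x(t)=\Phi(t,s_{-})P(s_{-})x(s_{-})$ and the expansive part as $x^{-}(t)=Q(t)x(t)=\Phi(t,s_{+})Q(s_{+})x(s_{+})$.

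The estimates then come directly from \eqref{HD2}. Since $t\geq s_{-}$ and $t\ast s_{-}^{\ast-1}=T$, the first inequality gives $|x^{+}(t)|\leq K\,h(T)^{-\alpha}|x(s_{-})|$; since $s_{+}\geq t$ and $s_{+}\ast t^{\ast-1}=T$, the second gives $|x^{-}(t)|\leq K\,h(T)^{-\alpha}|x(s_{+})|$. Adding these and bounding $|x(s_{\pm})|$ by the supremum over the window yields $|x(t)|\leq 2K\,h(T)^{-\alpha}\sup_{|u\ast t^{\ast-1}|_{\ast}\leq T}|x(u)|=\theta\sup(\cdots)$, which is exactly \eqref{NCP}.

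I expect the only delicate points to be the bookkeeping in the group $(J,\ast)$: verifying the identities $t\ast s_{-}^{\ast-1}=T$ and $s_{+}\ast t^{\ast-1}=T$ from \eqref{group2}--\eqref{group3}, and checking that the comparison points lie both in the admissible interval $[e_{\ast},+\infty)$ and in the noncriticality window. The crucial structural fact that makes the argument work \emph{uniformly} in $t$ is that these group products collapse to the constant $T$, so the contraction factor $h(T)^{-\alpha}$ is independent of $t$; this is precisely what allows a single $\theta$ to serve for all $t\geq T$. Note that no step requires the projectors to be bounded, since both parts are estimated against the full norm $|x(s_{\pm})|$ rather than against $|x^{\pm}(s_{\pm})|$.
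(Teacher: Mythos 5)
Your proof is correct, but it follows a genuinely different route from the one the paper gives for this lemma. The paper works with the components $x_1(t)=\Phi(t,s)P(s)x(s)$ and $x_2(t)=\Phi(t,s)Q(s)x(s)$ anchored at a single reference time $s$, derives both upper (contraction) and lower (expansion) estimates, splits into the cases $|x_2(s)|\geq|x_1(s)|$ and $|x_2(s)|\leq|x_1(s)|$, and extracts a \emph{lower} bound $|x(t)|\geq \Psi(|t\ast s^{\ast-1}|_{\ast})\max\{|x_1(s)|,|x_2(s)|\}$ via the auxiliary function $\Psi(u)=K^{-1}h(u)^{\alpha}-Kh(u)^{-\alpha}$, choosing $T$ so that $\Psi(T)\geq 2/\theta$ before specializing $t$ to the window endpoints. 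You instead decompose $x(t)$ at the time $t$ itself, pull $P(t)x(t)$ back to $s_{-}=t\ast T^{\ast-1}$ and $Q(t)x(t)$ back to $s_{+}=t\ast T$ using \eqref{Invariance}, and bound each piece by $Kh(T)^{-\alpha}$ times the full norm at the corresponding endpoint, which requires only the two upper bounds in \eqref{HD2} — no lower bounds, no case analysis, and no $\Psi$. This is shorter and cleaner; in fact it amounts to composing the implications (i)$\Rightarrow$(ii)$\Rightarrow$(iii) that the paper proves later in its final theorem (there one takes $a=t\ast T^{\ast-1}$, $b=t\ast T$ and $\theta=2Kh(T)^{-\alpha}$ in exactly the same way). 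The only cosmetic discrepancy is that the paper's formula \eqref{NCP} writes the window as $T^{\ast-1}<u\ast t^{\ast-1}\leq T$ with a strict left inequality, whereas your point $s_{-}$ sits on the boundary $u\ast t^{\ast-1}=T^{\ast-1}$; by \eqref{EVA} the set $\{u:|u\ast t^{\ast-1}|_{\ast}\leq T\}$ is the closed window, and continuity of $x$ makes the two suprema equal, so this does not affect your argument. Your group-theoretic bookkeeping ($t\ast s_{-}^{\ast-1}=T$, $s_{+}\ast t^{\ast-1}=T$, and $s_{\pm}\geq e_{\ast}$ for $t\geq T$) is all verified correctly.
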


\begin{proof}
    Suppose that \eqref{lin} has a uniform $h$--dichotomy on $[e_{\ast},+\infty)$ with constants $K$ and  $\alpha$. For any solution $t\mapsto x(t)$ of (\ref{lin}) let us define the auxiliary functions: 
    \begin{equation}
    \label{x1x2}
        x_1(t):= \Phi(t,s)P(s)x(s) \quad \textnormal{ and } \quad x_2(t):= \Phi(t,s)Q(s)x(s).
    \end{equation}
    Now, it can be easily verified that $x_1(t)$ and $x_2(t)$ satisfy the following properties:
\begin{subequations}
  \begin{empheq}{align}
  & x(t)=x_1(t)+x_2(t)  \label{i)}, \\
&   x_1(s)=P(s)x(s) \quad \textnormal{and} \quad x_2(s)=Q(s)x(s)  \label{ii)}, \\
& P(t)x_{1}(t)=x_{1}(t) \quad \textnormal{and} \quad Q(t)x_{2}(t)=x_{2}(t)  \label{iii)}. 
\end{empheq}
\end{subequations}
    
By using (\ref{HD2}), (\ref{x1x2}),(\ref{ii)}) and (\ref{iii)}) we can deduce the following estimations
when $t\geq s$: 
    \begin{equation*}
    \left\{\begin{array}{rcl}
       |x_1(t)| & = & |P(t)\Phi(t,s)x_1(s)|=|\Phi(t,s)P(s)x_1(s)|\leq Kh(t\ast s^{\ast -1})^{-\alpha}|x_1(s)|, 
    \\\\
        |x_2(s)| &=& |\Phi(s,t)Q(t)\Phi(t,s)Q(s)x_2(s)|\leq Kh(t\ast s^{\ast-1})^{-\alpha}|x_2(t)|.
    \end{array}\right.
    \end{equation*}

    Similarly, if $t\leq s$ we can deduce the estimations:
    \begin{equation*}
    \left\{\begin{array}{rcl}
       |x_2(t)| &=& |\Phi(t,s)Q(s)x(s)|=|\Phi(t,s)Q(s)x_2(s)|\leq Kh(s\ast t^{\ast -1})^{-\alpha}|x_2(s)|,\\\\ 
       |x_1(s)| &=& |\Phi(s,t)P(t)\Phi(t,s)P(s)x_1(s)|\leq Kh(s\ast t^{\ast-1})^{-\alpha}|x_1(t)|.
    \end{array}\right.   
    \end{equation*}
      \medskip 
    The above inequalities can be summarized as follows:
       \begin{equation*}
               |x_1(t)|\leq Kh(t\ast s^{\ast -1})^{-\alpha}|x_1(s)| \quad \textnormal{and} \quad |x_2(t)|\geq K^{-1}h(t\ast s^{\ast-1})^{\alpha}|x_2(s)| \,\,\, \textnormal{if} \,\, t\geq s
       \end{equation*}
        and 
       \begin{equation*}
           |x_2(t)|\leq Kh(s\ast t^{\ast -1})^{-\alpha}|x_2(s)|;\quad \textnormal{and}\quad |x_1(t)|\geq K^{-1}h(s\ast t^{\ast-1})^{\alpha}|x_1(s)| \,\,\, \textnormal{if}\,\, t\leq s.
       \end{equation*}
       
       Now, we will assume that $|x_2(s)|\geq |x_1(s)|$, then if $t\geq s$,  the above inequalities and (\ref{i)}) imply that

       \begin{align*}
           |x(t)| &\geq |x_2(t)| - |x_1(t)|\\
           &\geq K^{-1}h(t\ast s^{\ast-1})^{\alpha}|x_2(s)| - Kh(t\ast s^{\ast-1})^{-\alpha}|x_1(s)|\\
           &\geq \left\{K^{-1}h(t\ast s^{\ast-1})^{\alpha} - Kh(t\ast s^{\ast-1})^{-\alpha}\right\}|x_2(s)|. 
       \end{align*}

       Similarly, if $|x_2(s)|\leq |x_1(s)|$, then for $s\geq t$,
       
       \begin{align*}
        |x(t)| &\geq |x_1(t)| - |x_2(t)|\\
        &\geq K^{-1}h(s\ast t^{\ast-1})^{\alpha}|x_1(s)| - Kh(s\ast t^{\ast-1})^{-\alpha}|x_2(s)|\\
        &\geq \left\{K^{-1}h(s\ast t^{\ast-1})^{\alpha} - Kh(s\ast t^{\ast-1})^{-\alpha}\right\}|x_1(s)|.
       \end{align*}
       
       Defining the auxiliary function
       \begin{equation*}
           u\mapsto \Psi(u):= K^{-1}h(u)^{\alpha} - K h(u)^{-\alpha},
       \end{equation*}
    it follows that
    \begin{equation}\label{In-psi1}
        |x(t)|\geq \Psi(t\ast s^{\ast-1})|x_2(s)| \quad \textnormal{ if } t\geq s,
    \end{equation}
    and
    \begin{equation}\label{In-psi2}
        |x(t)|\geq \Psi(s\ast t^{\ast-1})|x_1(s)| \quad \textnormal{ if } t\leq s.
    \end{equation}
    For the function $\Psi$, we can deduce that
    \begin{itemize}
        \item $\Psi(e_{\ast}) = K^{-1}h(e_\ast)^{\alpha} - K h(e_\ast)^{-\alpha} = \dfrac{1}{K} - K \leq 0$.
        \item $\Psi$ is strictly increasing since  $h$  is positive and strictly increasing.
        \item $\Psi$ is upperly unbounded.
    \item $\Psi(t_0)=0$, where $t_0=h^{-1}\Big(e^{\frac{1}{\alpha}\ln K}\Big)>h^{-1}(1)=e_{\ast}$ and $K>1$.
    \end{itemize}
For any $\theta\in (0,1)$ we can choose $T>h^{-1}\Big(e^{\frac{1}{\alpha}\ln K}\Big)$ such that $\Psi(T)\geq\dfrac{2}{\theta}$. Then if $t\ast s^{\ast-1}\geq T>e_{\ast}$, from \eqref{In-psi1}, we have
\begin{align*}
    |x(t)| \geq \Psi(T)|x_2(s)|\geq\dfrac{2}{\theta}|x_2(s)|.
\end{align*}
Moreover, as $t\ast s^{\ast-1}\geq T>e_{\ast}$ implies that $t>s$, we will consider the case
$|x_1(s)|\leq |x_2(s)|$ and we get
\begin{equation*}
    |x(s)|\leq |x_1(s)| + |x_2(s)| \leq 2|x_2(s)|\leq \theta|x(t)|.
\end{equation*}
In a similar way, if $s\ast t^{\ast-1}\geq T>e_{\ast}$, from \eqref{In-psi2} and the inequality $|x_2(s)|\leq |x_1(s)|$, it is shown that
\begin{equation*}
    |x(t)| \geq \dfrac{2}{\theta}|x_1(s)| \geq \dfrac{1}{\theta}|x(s)|.
\end{equation*}
Thus, by gathering the above estimations, we can conclude that
\begin{equation*}
    |x(s)| \leq \theta|x(t)|, \quad \textnormal{ if } \quad |s\ast t^{\ast-1}|_{\ast}\geq T.
\end{equation*}
Next, for a fixed $s\geq T$, we choose the limit case $t=s\ast T\geq s$, then
\begin{align}
    |x(s)| &\leq \theta|x(s\ast T)|\nonumber\\
    &\leq \theta\sup\limits_{s\leq u\leq s\ast T}|x(u)|=\theta \sup\{|x(u)| : e_{\ast}\leq u\ast s^{\ast-1}\leq T\}.\label{NCmayor}
\end{align}
For a fixed $s\geq T$, we now choose $t=s\ast T^{\ast-1}<s$, then 
\begin{align}
    |x(s)|&\leq \theta|x(s\ast T^{\ast-1})|\nonumber\\
    &\leq \theta\sup\limits_{s\ast T^{\ast-1}\leq u\leq s}|x(u)| = \theta \sup\{|x(u)| : T^{\ast-1}\leq u\ast s^{\ast-1}\leq e_{\ast}\}.\label{NCmenor}
\end{align}
Finally, \eqref{NCmayor} and \eqref{NCmenor} imply that
\begin{equation*}
    |x(s)|\leq \theta\sup\limits_{|u\ast s^{\ast-1}|_{\ast}\leq T}|x(u)| \quad \textnormal{ for } s\geq T,
\end{equation*}
which concludes the proof of the Lemma.
\end{proof}

Despite that there are no converse result for the above one, we point out that when the linear system (\ref{lin}) is uniformly $h$--noncritical and also has the property of uniform bounded $h$--growth on $[e_{*},+\infty)$ with the same constant $T>e_{\ast}$, then the uniform $h$--dichotomy is verified:
\begin{theorem}\label{Lem-UNC,BG-hD}
 Assume that there exists constants $T>e_{\ast}$, $C_T>1$ and $0<\theta<1$ such that every solution $t\mapsto x(t)$ of \eqref{lin} satisfies 
\begin{subequations}
  \begin{empheq}{align}
&   |x(t)|\leq C_T|x(s)| \quad \mbox{ for } \quad e_{\ast}\leq s\leq t\leq s\ast T \label{BGT1}, \\
&   |x(t)|\leq \theta \sup\limits_{|u\ast t^{\ast-1}|_{\ast}\leq T}|x(u)| \quad \mbox{ for } t\geq T,  \label{UNCT1} 
\end{empheq}
\end{subequations}
then the system \eqref{lin} has a uniform $h$--dichotomy on $[e_{\ast},+\infty)$.
\end{theorem}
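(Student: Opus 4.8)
The plan is to produce, on the shifted half--line $[T,+\infty)$, a splitting $\mathbb{R}^{n}=V_{s}\oplus V_{u}$ of initial data whose solutions decay, respectively grow, at the rate fixed by $\alpha:=\ln(1/\theta)/\ln h(T)>0$ (here $h(T)>h(e_{\ast})=1$ since $T>e_{\ast}$), to verify the estimates \eqref{ELC1}--\eqref{ELC2} for the associated constant projector $P$, and then to close with the machinery already in place: the hypothesis \eqref{BGT1} is uniform bounded $h$--growth, so by Lemma \ref{Lem-Eq-BG} it gives \eqref{equivalencia}, whence Lemma \ref{C12-C3} upgrades \eqref{ELC1}--\eqref{ELC2} to \eqref{ELC3}; Proposition \ref{ELC} then converts \eqref{ELC1}--\eqref{ELC3} into a uniform $h$--dichotomy on $[T,+\infty)$, and finally Lemma \ref{Ent-int} transports it to $[e_{\ast},+\infty)$. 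Thus the genuine work is the construction of $V_{s},V_{u}$ together with the two growth estimates.

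The stable half is the easy one. Fixing a base point $t_{0}\geq T$, I would set $V_{s}=\{\xi:\sup_{t\geq t_{0}}|\Phi(t)\xi|<\infty\}$, visibly a subspace, and for a bounded solution put $m_{j}=\sup_{t\geq t_{0}\ast T^{\ast j}}|x(t)|$. If $t\geq t_{0}\ast T^{\ast(j+1)}$ then $t\ast T^{\ast-1}\geq t_{0}\ast T^{\ast j}$, so the supremum in \eqref{UNCT1} runs over an interval inside $[t_{0}\ast T^{\ast j},+\infty)$; hence $|x(t)|\leq\theta m_{j}$ and $m_{j+1}\leq\theta m_{j}$, giving $m_{j}\leq\theta^{j}m_{0}$. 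Via \eqref{potencia} and $\theta=h(T)^{-\alpha}$ this is decay at rate $(h(t)/h(t_{0}))^{-\alpha}$, and splitting $m_{0}$ across the first block and using \eqref{BGT1} to bound $\sup_{[t_{0},t_{0}\ast T]}|x|\leq C_{T}|x(t_{0})|$ forces $m_{0}\leq C_{T}|x(t_{0})|$. Taking $P$ to project onto $V_{s}$, this is exactly \eqref{ELC1} with $K_{1}=C_{T}/\theta$.

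The conceptual heart, which feeds the unstable half, is a discrete convexity hidden in \eqref{UNCT1}. Partitioning $[t_{0},+\infty)$ into blocks $[t_{0}\ast T^{\ast k},t_{0}\ast T^{\ast(k+1)}]$ and letting $b_{k}$ be the supremum of $|x|$ over the $k$--th block, the centered interval in \eqref{UNCT1} meets only blocks $k-1,k,k+1$, so $b_{k}\leq\theta\max(b_{k-1},b_{k},b_{k+1})$, which since $\theta<1$ sharpens to $b_{k}\leq\theta\max(b_{k-1},b_{k+1})$. Such a $\theta$--convex sequence is trapped in a valley profile: a forward step that increases keeps increasing by a factor $\geq1/\theta$, a descending step keeps descending by a factor $\leq\theta$, so there is a single minimizing index and $b_{k}\leq\theta^{k}b_{0}+\theta^{N-k}b_{N}$ on every block range $[0,N]$. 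Read through \eqref{potencia} and \eqref{group2}, and passed from block suprema to point values through \eqref{BGT1}, this is a two--sided expansive bound, and it makes every nonzero solution either bounded (pure descent, hence in $V_{s}$) or eventually growing like $h^{\alpha}$.

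The main obstacle will be to promote this per--solution dichotomy of norms into a genuine complementary invariant subspace $V_{u}$ carrying the \emph{uniform} estimate \eqref{ELC2}: on a half--line $V_{u}$ is not canonical, precisely because only forward bounded growth \eqref{BGT1} is assumed, so the adjoint system need not have bounded growth and one cannot simply dualize the stable construction. I would obtain $V_{u}$ by a limiting argument: for each $\tau\geq T$ take $U_{\tau}$ to be the space of data whose solutions are, up to the expansive constant, maximal at $\tau$ (equivalently, controlled backward by their value at $\tau$), note that the valley structure forces $\dim U_{\tau}=n-\dim V_{s}$, and extract a convergent subsequence $V_{u}=\lim U_{\tau_{m}}$ in the Grassmannian. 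The valley estimate passes to the limit and yields, for $\xi\in V_{u}$, the backward decay $|x(t)|\leq K_{2}(h(s)/h(t))^{-\alpha}|x(s)|$ for $t\leq s$, that is \eqref{ELC2}; and $V_{s}\cap V_{u}=\{0\}$ (a common nonzero solution would both decay and grow) with the dimension count gives $\mathbb{R}^{n}=V_{s}\oplus V_{u}$. With $P$ the projector along $V_{u}$ onto $V_{s}$ the estimates \eqref{ELC1}--\eqref{ELC2} hold, and the three results invoked above conclude. The delicate points I expect to fight are the uniformity in $\tau$ needed for the Grassmannian limit to preserve the constant $K_{2}$, and confirming that the limiting subspace keeps the full complementary dimension rather than dropping rank.
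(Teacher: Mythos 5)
Your overall architecture --- stable subspace of bounded solutions, estimates \eqref{ELC1}--\eqref{ELC2} for a constant projector, then Lemma \ref{C12-C3}, Proposition \ref{ELC} and Lemma \ref{Ent-int} to close --- coincides with the paper's, and your treatment of the stable half is essentially the paper's Lemma \ref{Bound-sol}. The genuine gap is in the unstable half. Your $U_{\tau}$ (``data whose solutions are, up to the expansive constant, maximal at $\tau$'') is not visibly a linear subspace, the claim $\dim U_{\tau}=n-\dim V_{s}$ is asserted rather than proved, and the two points you yourself flag as delicate --- that the Grassmannian limit keeps the full complementary dimension and that the constant $K_{2}$ survives the limit --- are exactly where the argument has to be made, and they are left open. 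Note also that for an individual unbounded solution your valley estimate only yields backward decay from a solution-dependent time onward (the first block at which growth sets in), so even granting the limit you obtain neither a uniform $K_{2}$ nor a uniform starting time in \eqref{ELC2}.

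The paper sidesteps all of this. It takes $V_{2}$ to be an \emph{arbitrary} algebraic complement of $V_{1}$ (no canonical unstable space is needed), applies its analogue of your growth estimate (Lemma \ref{Unbound-sol}) to each unbounded solution with a solution-dependent threshold $t_{1}(\xi)$, defined as the first time $|x(t,\xi)|$ reaches the level $\theta^{-1}C_{T}$, and then makes this threshold uniform over the unit sphere of $V_{2}$ by compactness combined with continuous dependence on initial conditions --- which is precisely where the hypothesis \eqref{BGT1} is used a second time, through Remark \ref{DCRCI}. That compactness step is the ingredient missing from your proposal: once a uniform $T_{1}$ with $t_{1}(\xi)\leq T_{1}$ for every unit vector $\xi$ in the complement is in hand, \eqref{ELC2} holds on $[T_{1},+\infty)$ with a single constant, and your concluding chain of lemmas goes through verbatim. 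I would replace the Grassmannian limit by this compactness argument; as written, the unstable construction does not yet constitute a proof.
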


The proof will be consequece of several Lemmas. Firstly, notice that any nontrivial solution $t\mapsto x(t)$ of (\ref{lin}) is either
bounded or unbounded on $[e_{\ast},+\infty)$. The idea is to prove that bounded and unbounded solutions
has the same qualitative properties that the functions described in the splitting of solution given in (\ref{splitting}).

\begin{lemma}\label{Bound-sol}
Under the assumptions of Theorem \ref{Lem-UNC,BG-hD}, there exists $K\geq 1$ and $\alpha>0$ such that any nontrivial solution $t\mapsto x(t)$ which is bounded on $[e_{\ast},+\infty)$ verifies:
\begin{equation}
\label{contra}
        |x(t)|    \leq  \displaystyle  K\left(\dfrac{h(t)}{h(s)}\right)^{-\alpha}|x(s)| \quad \textnormal{for any $t\geq s \geq e_{\ast}$}.
    \end{equation}
\end{lemma}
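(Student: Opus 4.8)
The plan is to exploit the fact that a bounded solution must behave like an $h$--contraction, exactly as in Coppel's treatment of the exponential case, but carried out inside the group $(J,\ast)$. The central object is the \emph{tail supremum}
$$M(s):=\sup_{u\geq s}|x(u)|,$$
which is finite for every $s\geq e_{\ast}$ because $x$ is bounded on $[e_{\ast},+\infty)$, and which is non-increasing in $s$.

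First I would establish a geometric contraction of $M$ along translates by $T$, namely $M(s\ast T)\leq\theta\,M(s)$ for every $s\geq e_{\ast}$. To see this, fix any $t\geq s\ast T$; then $t\geq e_{\ast}\ast T=T$, so the noncriticality hypothesis \eqref{UNCT1} applies, and by \eqref{EVA} the window $\{u:|u\ast t^{\ast-1}|_{\ast}\leq T\}$ equals $[t\ast T^{\ast-1},t\ast T]$. Since $t\geq s\ast T$ gives $t\ast T^{\ast-1}\geq s$ by \eqref{group4b}, this window is contained in $[s,+\infty)$, whence $|x(t)|\leq\theta\sup_{[t\ast T^{\ast-1},t\ast T]}|x|\leq\theta M(s)$. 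Taking the supremum over $t\geq s\ast T$ yields the claim.

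Second, I would combine this with the bounded growth hypothesis \eqref{BGT1} to bound $M(s)$ by $|x(s)|$. Splitting the tail as $[s,+\infty)=[s,s\ast T]\cup[s\ast T,+\infty)$ gives $M(s)\leq\max\{C_{T}|x(s)|,\,M(s\ast T)\}\leq\max\{C_{T}|x(s)|,\,\theta M(s)\}$; since $\theta<1$ the second alternative would force $M(s)\leq\theta M(s)$, which is impossible when $M(s)>0$, so in all cases $M(s)\leq C_{T}|x(s)|$. Iterating the contraction then gives $M(s\ast T^{\ast k})\leq\theta^{k}M(s)\leq C_{T}\theta^{k}|x(s)|$ for every integer $k\geq0$. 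Finally, given $t\geq s\geq e_{\ast}$ I would choose the largest $k\geq0$ with $s\ast T^{\ast k}\leq t$, so that $t\ast s^{\ast-1}<T^{\ast(k+1)}$ and hence, by \eqref{potencia} and the monotonicity of $h$, $h(t)/h(s)<h(T)^{k+1}$. This provides the lower bound $k>\ln(h(t)/h(s))/\ln h(T)-1$, which, since $\ln\theta<0$ and $h(T)>1$, converts $\theta^{k}$ into $\theta^{-1}(h(t)/h(s))^{-\alpha}$ with $\alpha:=\ln(1/\theta)/\ln h(T)>0$. Combining this with $|x(t)|\leq M(s\ast T^{\ast k})\leq C_{T}\theta^{k}|x(s)|$ yields \eqref{contra} with $K=C_{T}/\theta\geq1$.

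The step I expect to be the main obstacle is the first one: verifying that the two--sided noncriticality window sits entirely in the tail $[s,+\infty)$, so that the contraction $M(s\ast T)\leq\theta M(s)$ is legitimate. This is where the order and distance structure of $(J,\ast)$—in particular the equivalence \eqref{EVA} and the translation invariance \eqref{group4}--\eqref{group4b}—must be used carefully, since the raw pointwise estimate \eqref{UNCT1} only controls $|x(t)|$ by a supremum over a neighborhood of $t$ rather than over the whole tail, and the geometric decay of $M$ is precisely what upgrades this local statement to the global $h$--contraction estimate.
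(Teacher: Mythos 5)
Your argument is correct and follows essentially the same route as the paper's proof: the tail supremum $\mu(s)=\sup_{u\geq s}|x(u)|$, the geometric contraction $\mu(s\ast T)\leq\theta\,\mu(s)$ via the noncriticality window $[t\ast T^{\ast-1},t\ast T]\subset[s,+\infty)$, the bound $\mu(s)\leq C_T|x(s)|$ from bounded $h$--growth, and the logarithmic interpolation $T^{\ast k}\leq t\ast s^{\ast-1}<T^{\ast(k+1)}$ converting $\theta^{k}$ into $(h(t)/h(s))^{-\alpha}$. Even your final constants $K=\theta^{-1}C_T$ and $\alpha=-\ln\theta/\ln h(T)$ coincide with those obtained in the paper.
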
    

\begin{proof}

\noindent \textit{Step 1: A first estimation.}  As (\ref{BGT1}) provides information about the solution $x(\cdot)$ on $[s,s\ast T]$, we will study its properties when $t\geq s\ast T$. In order to do that let us define the map $\mu\colon [e_{\ast},+\infty)\to [0,+\infty)$ as follows: 
    \begin{equation*}
        \mu(s):=\sup\limits_{u\geq s}|x(u)|.
    \end{equation*}

Notice that $t\geq s\ast T$ is equivalent to
$t\ast T^{\ast -1}\geq s$ and also implies that $t\geq T$ since $s\geq e_{\ast}$. These facts combined with (\ref{UNCT1}) imply that:  
\begin{equation}
\label{e1}
    \begin{array}{rl}
        |x(t)| &\leq \theta\sup\limits_{|u\ast t^{\ast-1}|_{\ast}\leq T}|x(u)|\\
        &= \theta \sup\{|x(u)| : t\ast T^{\ast-1}\leq u \leq t\ast T\}\\
        &\leq  \theta \sup\limits_{s\leq u \leq t\ast T}|x(u)|\\
        &\leq \theta \sup\limits_{u\geq s}|x(u)|=\theta \mu(s).
    \end{array}
\end{equation}    

    The above estimation implies that 
    \begin{equation}
    \label{e0}
        \mu(s)=\sup\limits_{s\leq u\leq s\ast T}|x(u)|. 
    \end{equation}
    
    Indeed, by using $\sup(A\cup B)=\max\{\sup(A),\sup(B)\}$ and the estimation (\ref{e1}),
    we can deduce that
    \begin{displaymath}
        \mu(s) =\max\left\{\sup\limits_{s\leq u\leq s\ast T}|x(u)|,\sup\limits_{s\ast T \leq u}|x(u)|\right\}\leq  \max\left\{\sup\limits_{s\leq u\leq s\ast T}|x(u)|,\theta \mu(s)\right\},
    \end{displaymath}
which leads to 
$$
\mu(s)\leq \sup\limits_{s\leq u\leq s\ast T}|x(u)|,
$$
whereas the inverse inequality is trivial and (\ref{e0}) follows.

    Therefore, by using (\ref{BGT1}) and recalling that $t\geq s\ast T$, we deduce that
    \begin{align*}
        |x(t)| &\leq \theta \mu(s)\\
        &= \theta \sup\limits_{s\leq u\leq s\ast T}|x(u)|\\
        &\leq \theta C_T |x(s)|,
    \end{align*}
    which combined with (\ref{BGT1}) implies
    \begin{equation*}
        |x(t)| \leq C_T|x(s)| \quad \mbox{ for } \quad e_{\ast}\leq s\leq t. 
    \end{equation*}

\noindent \textit{Step 2: A useful inequality.} At this step, we will assume that
 $t\geq s\ast T^{\ast n}$ for some $n\in \mathbb{N}$. In addition,  by recalling that $T^{\ast  n}$ is strictly increasing, we have that $t\geq T$ and $t\ast T^{\ast -1}\geq s\ast T^{\ast (n-1)}$. These inequalities and (\ref{UNCT1}) imply: 
  \begin{equation}
  \label{e2}
    \begin{array}{rl}
        |x(t)| &\leq \theta \sup\limits_{|u\ast t^{*-1}|_{\ast}\leq T}|x(u)|\\
        &\leq \theta \sup\limits_{u\geq s\ast T^{\ast (n-1)}}|x(u)|=\theta \mu(s\ast T^{\ast(n-1)}).
    \end{array}
   \end{equation} 
    
As $t\geq s\ast T^{\ast n}$ and noticing that $n\in \mathbb{N}$ is arbitrary, the above estimation implies that 
    \begin{equation}
    \label{e3}
        \mu(s\ast T^{*n}) \leq \theta \mu(s\ast T^{\ast (n-1)}).
    \end{equation}

\noindent \textit{Step 3: $t\mapsto |x(t)|$ is an $h$--contraction.}

At this step we will assume that $t\geq s$. Notice there always exists some $n\in \mathbb{N}$ such that $s\ast T^{\ast n}\leq t\leq s\ast T^{\ast (n+1)}$, then the estimations (\ref{e2})--(\ref{e3}) together with $T^{\ast 0}=e_{\ast}$ as we defined in (\ref{puissance}) allow us to deduce that:
    \begin{displaymath}
        |x(t)|  \leq \theta \mu(s\ast T^{\ast (n-1)})\leq \theta^2 \mu(s\ast T^{\ast(n-2)})\\
        \cdots \leq \theta^n\mu(s),
    \end{displaymath}
and, by using (\ref{BGT1}) and (\ref{e0}), we have that

\begin{equation}  
\label{e5}
|x(t)|\leq  \theta^n\sup\limits_{s\leq u\leq s\ast T}|x(u)|\leq \theta^n C_T|x(s)|.
    \end{equation}
    
    On the other hand, as $T^{\ast n}\leq t\ast s^{\ast -1}\leq T^{\ast(n+1)}$, we use (\ref{LCI}) and (\ref{group0}) to see that 
    \begin{equation*}
        n\leq \dfrac{1}{\ln(h(T))}\ln\left(\dfrac{h(t)}{h(s)}\right)\leq n+1.
    \end{equation*}
    and then:
    \begin{equation*}
        \theta^{n+1}\leq \theta^{\dfrac{1}{\ln(h(T))}\ln\left(\dfrac{h(t)}{h(s)}\right)}\leq \theta^n.
    \end{equation*}

    The above estimation together with (\ref{e5}) implies:
    \begin{align*}
        |x(t)| &\leq \theta^{-1}C_T \theta^{n+1}|x(s)|\\
        &\leq \theta^{-1}C_T\left(\dfrac{h(t)}{h(s)}\right)^{\frac{\ln(\theta)}{\ln(h(T))}}|x(s)|.
    \end{align*}
    
    Taking $K=\theta^{-1}C_T\geq1$ and $\alpha=-\dfrac{\ln(\theta)}{\ln(h(T))}>0$ since $\theta \in (0,1)$ and $h(T)>1$, we obtain (\ref{contra}) and the Lemma follows.
\end{proof}

\begin{lemma}\label{Unbound-sol}
Under the assumptions of Theorem \ref{Lem-UNC,BG-hD}, there exists $K\geq 1$, $\alpha>0$ and $T_{1}>e_{\ast}$ such that any solution $t\mapsto x(t)$
unbounded on $[e_{\ast},+\infty)$ verifies:
 \begin{equation}
 \label{expa}
        |x(t)|\leq K\left(\frac{h(s)}{h(t)}\right)^{-\alpha}|x(s)| \quad \mbox{ for } \quad s\geq t\geq T_{1}> e_{\ast}.
        \end{equation}
\end{lemma}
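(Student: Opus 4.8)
The plan is to mirror the proof of Lemma \ref{Bound-sol}, but with the forward running supremum replaced by the backward one and the noncriticality hypothesis read in the opposite direction. For a nontrivial solution $t\mapsto x(t)$ that is unbounded on $[e_{\ast},+\infty)$, set $\nu(s):=\sup_{e_{\ast}\le u\le s}|x(u)|$, which is nondecreasing and, since $x(\cdot)$ is unbounded, satisfies $\nu(s)\to+\infty$; writing $M_{0}:=\sup_{e_{\ast}\le u\le T}|x(u)|<+\infty$, there is a threshold beyond which $\nu(s)>M_{0}$. The first step is to localize the supremum to the last window, proving the analogue of (\ref{e0}): for $s$ large enough, $\nu(s)=\sup_{s\ast T^{\ast-1}\le u\le s}|x(u)|$. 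Indeed, for $u\in[T,s\ast T^{\ast-1}]$ the window $[u\ast T^{\ast-1},u\ast T]$ in (\ref{UNCT1}) lies inside $[e_{\ast},s]$, so $|x(u)|\le\theta\nu(s)$; together with $|x(u)|\le M_{0}$ on $[e_{\ast},T]$ and $\theta<1$, this forces the supremum to be attained on $[s\ast T^{\ast-1},s]$.

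Second, I would show that $\nu$ grows at least geometrically, namely $\nu(s\ast T)\ge\theta^{-1}\nu(s)$. Applying (\ref{UNCT1}) at a point $u^{\ast}\in[s\ast T^{\ast-1},s]$ where the localized supremum is attained, the corresponding window lies inside $[e_{\ast},s\ast T]$, whence $\nu(s)=|x(u^{\ast})|\le\theta\,\nu(s\ast T)$, which is the claim. Iterating it backward yields $\nu(t)\le\theta^{\,n}\nu(s)$ whenever $T^{\ast n}\le s\ast t^{\ast-1}$, the exact analogue of (\ref{e3}).

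The crux, and the step I expect to be the main obstacle, is to replace the running supremum $\nu(s)$ by the pointwise value $|x(s)|$ on the right-hand side, i.e. to prove $\nu(s)\le C_{T}\theta\,|x(s)|$. Here the asymmetry of the hypotheses bites: we are given uniform bounded $h$--growth (\ref{BGT1}) but no bounded decay, so a past peak cannot be controlled by the present value directly. The remedy is to look forward instead of backward. By the localization applied at $s\ast T$, the supremum $\nu(s\ast T)$ is attained at some $v^{\ast}\in[s,s\ast T]$, and since $s\le v^{\ast}\le s\ast T$ the growth bound (\ref{BGT1}) gives $\nu(s\ast T)=|x(v^{\ast})|\le C_{T}|x(s)|$; combined with the geometric growth $\nu(s\ast T)\ge\theta^{-1}\nu(s)$ of the previous step this produces $\nu(s)\le C_{T}\theta\,|x(s)|$. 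In other words, the present value is forced to be comparable to the running supremum precisely because a large value is guaranteed to occur within the next window.

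Finally I would assemble the pieces as in Step 3 of Lemma \ref{Bound-sol}. For $s\ge t\ge T_{1}$ choose $n$ with $T^{\ast n}\le s\ast t^{\ast-1}<T^{\ast(n+1)}$; then $|x(t)|\le\nu(t)\le\theta^{\,n}\nu(s)\le C_{T}\theta^{\,n+1}|x(s)|$. Using $h(s\ast t^{\ast-1})=h(s)/h(t)$ from (\ref{group0}) and $h(T^{\ast n})=h(T)^{n}$ from (\ref{potencia}), the choice of $n$ gives $h(s)/h(t)<h(T)^{n+1}$, hence $\theta^{\,n+1}\le (h(s)/h(t))^{\ln\theta/\ln h(T)}=(h(s)/h(t))^{-\alpha}$ with $\alpha:=-\ln\theta/\ln h(T)>0$, the same exponent as in the contractive case. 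This yields (\ref{expa}) with $K=C_{T}$ and this $\alpha$. The constant $T_{1}>e_{\ast}$ is taken large enough that $\nu(s)>M_{0}$ and that every window used above remains inside $[e_{\ast},+\infty)$ (it suffices, for instance, that $s\ast T^{\ast-2}\ge e_{\ast}$), which is the origin of the restriction $t\ge T_{1}$ in the statement.
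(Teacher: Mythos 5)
Your argument is correct, but it reaches (\ref{expa}) by a genuinely different route than the paper. The paper's proof fixes a normalized unbounded solution and introduces the first-passage times $t_{n}$ at which $|x(\cdot)|$ reaches the geometric levels $\theta^{-n}C_{T}$ (see (\ref{QPS})); the noncriticality hypothesis (\ref{UNCT1}) is used to prove the spacing bound $t_{n+1}\leq t_{n}\ast T$, and the bounded-growth hypothesis (\ref{BGT1}) then interpolates between consecutive levels. You instead work with the running maximum $\nu(s)=\sup_{e_{\ast}\leq u\leq s}|x(u)|$: noncriticality localizes $\nu(s)$ to the last $T$--window and yields the geometric growth $\nu(s)\leq\theta\,\nu(s\ast T)$, and your key additional observation --- that $\nu(s\ast T)$ is attained on $[s,s\ast T]$ and is therefore bounded by $C_{T}|x(s)|$ via (\ref{BGT1}) --- converts the running maximum back into the pointwise value. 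This is exactly the dual of the device used for bounded solutions in Lemma \ref{Bound-sol} (compare your localization with (\ref{e0}) and your geometric iteration with (\ref{e3})), so your proof makes the two lemmas visibly symmetric, at the cost of having to justify the extra comparison $\nu(s)\leq C_{T}\theta|x(s)|$, which has no analogue in the paper's level-crossing argument; both approaches deliver the same constants $\alpha=-\ln(\theta)/\ln(h(T))$ and a $K$ of the form $C_{T}$ times a power of $\theta$. One caveat applies equally to both proofs: your threshold $T_{1}$ (chosen so that $\nu(s)>M_{0}$ and all windows fit inside $[e_{\ast},+\infty)$) depends on the particular solution, just as the paper's choice $T_{1}=t_{1}$ does; in both cases the uniformity of $T_{1}$ over unbounded directions is only obtained afterwards, in the proof of Theorem \ref{Lem-UNC,BG-hD}, by the compactness argument on the unit sphere of $V_{2}$.
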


\begin{proof}

\noindent \textit{Step 1: Preliminaries.}
    Let $t\mapsto x(t)$ be an unbounded solution with $|x(e_{\ast})|=1$. As $x(\cdot)$ is also continuous, we can take a sequence $\{t_n\}$ with $t_n>e_{\ast}$ such that  
    \begin{equation}
    \label{QPS}
        |x(t_n)|=\theta^{-n}C_T \quad \textnormal{and} \quad |x(t)|<\theta^{-n}C_T \quad \mbox{ for } \quad e_{*}\leq t< t_nm
    \end{equation}
which implies that the sequence is strictly increasing and upper unbounded. On the other hand, by the uniform bounded $h$--growth (\ref{BGT1}) with $s=e_{\ast}$ we have that:
    \begin{equation*}
        |x(t)|\leq C_T|x(e_{\ast})|=C_T<\theta^{-1}C_{T} \quad \mbox{ for } \quad e_{\ast}\leq t\leq T,
    \end{equation*}
    which allow us to see that $T<t_{1}$. 

\noindent \textit{Step 2:  $t_{n+1}\leq t_n\ast T$ for any $n\in \mathbb{N}$.}
    For if not, there exists $n_0 \in \mathbb{N}$ such that $t_{n_0+1}>t_{n_0}\ast T$, this fact combined with recalling $t_{n_0}>T$ and (\ref{UNCT1}) leads to:
    \begin{equation*}
        |x(t_{n_0})|\leq \theta\sup\limits_{|u\ast t_{n_0}^{\ast-1}|_{\ast}\leq T}|x(u)|\leq \theta\sup\limits_{e_{\ast}\leq u\leq t_{n_0}\ast T}|x(u)|\leq  \theta\sup\limits_{e_{\ast}\leq u\leq t_{n_0+1}}|x(u)|.
    \end{equation*}

    In addition, if $\sup\limits_{e_{\ast}\leq u\leq t_{n_0+1}}|x(u)|=|x(u_0)|$ where $e_{\ast}\leq u_0\leq t_{n_0+1}$, the 
    properties (\ref{QPS}) imply that
    \begin{equation*}
        |x(u_0)|<\theta^{-(n_0+1)}C_T=\theta^{-1}|x(t_{n_0})|.
    \end{equation*}

    By gathering the above estimations, we obtain that
    \begin{equation*}
        |x(t_{n_0})|\leq \theta\sup\limits_{e_{\ast}\leq u\leq t_{n_0+1}}|x(u)|<|x(t_{n_0})|,
    \end{equation*}
    which is a contradiction. 

    \medskip
    
\noindent \textit{Step 3: End of proof.}    
    Suppose that $e_{\ast}\leq t\leq s$ and $t_m\leq t<t_{m+1}$, $t_n\leq s< t_{n+1}$ with $1\leq m<n$. By (\ref{QPS})
    and the qualitative properties of $\{t_{n}\}_{n}$  we have that:
   \begin{equation}
   \label{SEF}
    \begin{array}{rl}
        |x(t)| &\leq \theta^{-m-1}C_T\\
        &=\theta^{n-m}|x(t_{n+1})|\\
        &\leq \theta^{-1}C_T \theta^{n-m+1}|x(s)|,
    \end{array}
   \end{equation} 
where the last estimation follows from $t_{n+1}\leq t_{n}\ast T$ combined with (\ref{BGT1}).

    In order to obtain a better estimation of (\ref{SEF}), we have that
    \begin{equation*}
        s\ast t^{\ast-1}\leq t_{n+1}\ast t^{\ast-1}_m\leq (t_n\ast T)\ast t^{\ast-1}_m = (t_n\ast t^{\ast-1}_m)\ast T.
    \end{equation*}
    Moreover,
    \begin{equation*}
        t_n\leq t_{n-1}\ast T\leq t_{n-2}\ast T^{*2}\leq \cdots \leq t_m\ast T^{\ast(n-m)},
    \end{equation*}
    which implies that 
    \begin{equation*}
        h(s\ast t^{*-1}) 
 \leq h(T^{\ast (n-m+1)})=h(T)^{n-m+1}
    \end{equation*}
    and so
    \begin{equation*}
        \theta^{n-m+1}\leq \theta^{\frac{\ln(h(s\ast t^{\ast-1}))}{\ln(h(T))}}.
    \end{equation*}
    Next, upon inserting the above estimation in (\ref{SEF}), we get that
    \begin{align*}
        |x(t)| &\leq \theta^{-1}C_T\theta^{\frac{\ln(h(s\ast t^{\ast-1}))}{\ln(h(T))}}|x(s)|\\
        &= \theta^{-1}C_Te^{\ln(h(s\ast t^{\ast-1}))\frac{\ln(\theta)}{\ln(h(T))}}|x(s)|\\
        &= \theta^{-1}C_T h(s\ast t^{\ast-1})^{\frac{\ln(\theta)}{\ln(h(T))}}|x(s)|,
    \end{align*}
then (\ref{expa}) is obtained with $K=\theta^{-1}C_T\geq1$, $\alpha=-\ln(\theta)/\ln(h(T))>0$
    and $T_{1}=t_{1}$.
\end{proof}

\subsection{Uniform $h$--dichotomy and uniform $h$--noncriticality: Proof of Theorem \ref{Lem-UNC,BG-hD}} Let us consider the subspace: 
$$
V_1:=\{\xi\in\mathbb{R}^n : 
 \sup\limits_{t\geq e_{\ast}} |\Phi(t,e_{\ast})\xi|<+\infty\}
 $$ 
 and let $V_2$ be a subspace of $\mathbb{R}^n$ such that $\mathbb{R}^n=V_1\oplus V_2$. For any $\xi\in V_2$, with $|\xi|=1$, let $t\mapsto x(t)=x(t,\xi)$ be the solution of \eqref{lin} with initial condition $x(e_{\ast})=\xi$. Then $t\mapsto x(t,\xi)$ is unbounded and so there exists a value $t_1:=t_1(\xi)$ such that 
 $$
|x(t,\xi)|<\theta^{-1}C_{T} \quad \textnormal{for any $e_{\ast}\leq t<t_{1}(\xi)$} \quad \textnormal{and} \quad |x(t_1,\xi)|=\theta^{-1}C_{T}.
 $$
 
 We will show that the set $B:=\{t_1(\xi): \xi\in V_2 \quad \textnormal{ and } \quad |\xi|=1\}$ is bounded. In fact, if $B$ is unbounded, there is a sequence $\{\xi_{\nu}\}_{\nu\in\mathbb{N}}\subset V_2$ with $|\xi_{\nu}|=1$ and $t_1^{(\nu)}=t_1(\xi_{\nu})\to +\infty$ as $\nu\to +\infty$. By the compactness of the unit sphere in $V_2$, we may assume that $\xi_{\nu}\to \xi_0$ as $\nu\to +\infty$, for some unit vector $\xi_0\in V_2$. Thus, from Remark \ref{DCRCI}, we have that the uniform bounded $h$--growth property implies that
 \begin{equation*}
     x(t,\xi_{\nu}) \to x(t,\xi) \quad \textnormal{ as } \quad \nu\to +\infty,
 \end{equation*}
 for all $t\geq e_{\ast}$. Since $|x(t,\xi_{\nu})|<\theta^{-1}C_{T}$ for $e_{\ast}\leq t< t_1^{(\nu)}$, it follows that
 \begin{equation*}
     |x(t,\xi_0)|\leq \theta^{-1}C_{T}, \quad \textnormal{ for } \quad e_{\ast}\leq t<+\infty,
 \end{equation*}
 which is a contradiction  with the fact that $\xi_0\in V_2$. Then there is $T_1>e_{\ast}$ such that $t_1(\xi)\leq T_1$ for all unit vector $\xi\in V_2$. Next, for all $\xi\in V_2$, with $\xi\neq 0$, Lemma \ref{Unbound-sol} implies that 
 \begin{equation*}
     |x(t,\xi)|\leq|\xi|K\left(\frac{h(s)}{h(t)}\right)^{-\alpha}|x(s,\frac{\xi}{|\xi|})|=K\left(\frac{h(s)}{h(t)}\right)^{-\alpha}|x(s,\xi)|,
 \end{equation*}
 for $s\geq t\geq T_1>e_{\ast}$.
 
 Now, let $P$ be the projection from the split $\mathbb{R}^{n}=V_1\oplus V_2$ on the subspace $V_1$. Then for each $\xi\in \mathbb{R}^n$, by Lemmas  \ref{Bound-sol} and \ref{Unbound-sol}, we have that 
 \begin{equation*}
     |\Phi(t)P\xi|\leq K\left(\frac{h(t)}{h(s)}\right)^{-\alpha}|\Phi(s)P\xi| \quad \textnormal{ for } \quad t\geq s\geq T_1 
 \end{equation*}
 and
 \begin{equation*}
     |\Phi(t)(I-P)\xi|\leq K\left(\frac{h(s)}{h(t)}\right)^{-\alpha}|\Phi(s)(I-P)\xi| \quad \textnormal{ for } \quad s\geq t\geq T_1.
 \end{equation*}
 The uniform bounded $h$--growth property together with Lemma \ref{C12-C3} imply that there exists a positive constant $M$ such that 
 \begin{equation*}
     |\Phi(t)P\Phi^{-1}(t)|\leq M \quad \textnormal{for any $t\geq T_{1}$}.
 \end{equation*}
 Hence, from Proposition \ref{ELC} we have that \eqref{lin} has a uniform $h$--dichotomy on the interval $[T_1,+\infty)$ and by Lemma \ref{Ent-int}, it has a uniform $h$--dichotomy on $[e_{\ast}, +\infty)$.

\subsection{Uniform $h$--dichotomy and $h$--expansiveness}

The following result ge\-ne\-ralizes a result of Palmer \cite[Th.1]{Palmer},
which states the equivalence between the properties of uniform $h$--dichotomy, 
uniform $h$--non criticality and $h$--expansiveness, provided that the uniform bounded $h$--growth property is verified.

\begin{theorem}
    Assume that system \eqref{lin} has a uniform bounded $h$--growth on $[e_{\ast},+\infty)$. Then the following three statements are equivalent:
    \begin{enumerate}
        \item[(i)] System \eqref{lin} has a uniform $h$--dichotomy on $[e_{\ast},+\infty)$.
        \item[(ii)] System \eqref{lin} is $h$--expansive on $[e_{\ast},+\infty)$.
        \item[(iii)] System \eqref{lin} is uniformly $h$--noncritical on $[e_{\ast},+\infty)$.
    \end{enumerate}

    Moreover, without the assumption of uniform bounded $h$--growth, it is still true that the following implications are verified: $\textnormal{(i)}\Rightarrow\textnormal{(ii)}\Rightarrow\textnormal{(iii)}$.
\end{theorem}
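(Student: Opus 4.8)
The plan is to establish the cycle $\textnormal{(i)}\Rightarrow\textnormal{(ii)}\Rightarrow\textnormal{(iii)}\Rightarrow\textnormal{(i)}$, arranging the argument so that only the last implication invokes the uniform bounded $h$--growth hypothesis. The first two implications will then simultaneously yield the final ``Moreover'' assertion, and for $\textnormal{(iii)}\Rightarrow\textnormal{(i)}$ I would simply appeal to Theorem \ref{Lem-UNC,BG-hD}, which already contains the substantial work.

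For $\textnormal{(i)}\Rightarrow\textnormal{(ii)}$, I would start from the splitting $x(t)=\Phi(t,a)P(a)x(a)+\Phi(t,a)Q(a)x(a)$ on a fixed interval $[a,b]$. The contractive part is bounded directly by \eqref{HD2}: for $a\le t$ one has $|\Phi(t,a)P(a)x(a)|\le Kh(t\ast a^{\ast-1})^{-\alpha}|x(a)|$. For the expansive part the key observation is that the invariance \eqref{Invariance} lets me re-base it at the right endpoint, namely $\Phi(t,a)Q(a)x(a)=\Phi(t,b)Q(b)x(b)$, so that for $t\le b$ the second inequality of \eqref{HD2} gives $|\Phi(t,a)Q(a)x(a)|\le Kh(b\ast t^{\ast-1})^{-\alpha}|x(b)|$. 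Adding the two estimates yields \eqref{h-exp} with $L=K$ and $\beta=\alpha$; note that no growth assumption is used.

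For $\textnormal{(ii)}\Rightarrow\textnormal{(iii)}$, given $t\ge T$ I would apply the $h$--expansiveness on the symmetric interval $[a,b]=[t\ast T^{\ast-1},t\ast T]$, whose endpoints lie in $[e_{\ast},+\infty)$ precisely because $t\ge T$. Using \eqref{group3} together with commutativity one computes $t\ast a^{\ast-1}=t\ast T\ast t^{\ast-1}=T$ and $b\ast t^{\ast-1}=T$, so at the midpoint both factors collapse to $h(T)^{-\beta}$ and \eqref{h-exp} becomes $|x(t)|\le Lh(T)^{-\beta}\{|x(a)|+|x(b)|\}$. Since $|a\ast t^{\ast-1}|_{\ast}=|b\ast t^{\ast-1}|_{\ast}=T$, both $|x(a)|$ and $|x(b)|$ are dominated by $\sup_{|u\ast t^{\ast-1}|_{\ast}\le T}|x(u)|$, whence $|x(t)|\le 2Lh(T)^{-\beta}\sup_{|u\ast t^{\ast-1}|_{\ast}\le T}|x(u)|$. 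As $h(T)\to+\infty$, I would enlarge $T$ until $\theta:=2Lh(T)^{-\beta}<1$, obtaining exactly \eqref{NCP}; again no growth assumption intervenes, which completes the ``Moreover'' part.

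Finally, for $\textnormal{(iii)}\Rightarrow\textnormal{(i)}$ I would combine the uniform $h$--noncriticality constants $T>e_{\ast}$, $\theta\in(0,1)$ from \eqref{NCP} with the bounded growth hypothesis. By Lemma \ref{Lem-Eq-BG} the latter is equivalent to $\|\Phi(t,s)\|\le K_{0}h(t\ast s^{\ast-1})^{\beta}$, which for $s\le t\le s\ast T$ reduces to \eqref{BGT1} with the same $T$ and $C_T=K_0h(T)^{\beta}$ (enlarged to exceed $1$ if $\beta=0$). Hypotheses \eqref{BGT1} and \eqref{UNCT1} of Theorem \ref{Lem-UNC,BG-hD} then hold with a common $T$, and that theorem delivers the uniform $h$--dichotomy on $[e_{\ast},+\infty)$. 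I expect the only delicate point to be purely bookkeeping: ensuring that a single $T$ serves both hypotheses. This is automatic, because the characterization in Lemma \ref{Lem-Eq-BG} is valid for every $T>e_{\ast}$, so I simply adopt the $T$ produced by noncriticality. The genuine analytic difficulty of the equivalence is thus entirely absorbed into Theorem \ref{Lem-UNC,BG-hD} and its supporting Lemmas \ref{Bound-sol} and \ref{Unbound-sol}.
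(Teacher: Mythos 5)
Your proposal is correct and follows essentially the same route as the paper: the same re-basing of the $Q$--component at the right endpoint for $\textnormal{(i)}\Rightarrow\textnormal{(ii)}$, the same choice $[a,b]=[t\ast T^{\ast-1},t\ast T]$ with $\theta=2Lh(T)^{-\beta}$ for $\textnormal{(ii)}\Rightarrow\textnormal{(iii)}$, and the same appeal to Theorem \ref{Lem-UNC,BG-hD} for $\textnormal{(iii)}\Rightarrow\textnormal{(i)}$. Your extra remark that Lemma \ref{Lem-Eq-BG} lets one take the bounded--growth constant $T$ equal to the one furnished by noncriticality is a small bookkeeping point the paper leaves implicit, and it is handled correctly.
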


\begin{proof}
    We firstly assume that \eqref{lin} does not has the property of
    uniform bounded $h$--growth on $[e_{\ast},+\infty)$. Now, let us suppose that
    the system has a uniform $h$--dichotomy on $[e_{\ast},+\infty)$ with constants $K\geq1$ and $\alpha>0$. If $t\mapsto x(t)$ is a solution of \eqref{lin} and $a,b\in [e_{\ast},+\infty)$, then we have:    
    \begin{equation}\label{Th-1}
        x(t)=\Phi(t,a)P(a)x(a) + \Phi(t,a)Q(a)x(a)
    \end{equation}
    and
    \begin{equation}\label{Th-2}
        x(t)=\Phi(t,b)P(b)x(b) + \Phi(t,b)Q(b)x(b).
    \end{equation}
    
    Setting $t=a$ in \eqref{Th-2} and substituting into the second term of \eqref{Th-1} we obtain
\begin{equation*}
    x(t) = \Phi(t,a)P(a)x(a) + \Phi(t,b)Q(b)x(b).
\end{equation*}
Hence, if $e_{\ast}\leq a\leq t\leq b$, the uniform $h$--dichotomy implies that:
\begin{equation}
\label{he}
    |x(t)|\leq K\{ h(t\ast a^{\ast-1})^{-\alpha}|x(a)| + h(b\ast t^{\ast-1})^{-\alpha}|x(b)|\},
\end{equation}
which shown that (ii) holds, that is, the $h$--expansiveness is verified with
constants $K$ and $\alpha$.

Next, suppose that (ii) is satisfied as in (\ref{he}) and consider
$T$ with $t\geq T>e_{\ast}$. By using this inequality combined with the properties (\ref{group4})--(\ref{group4b}) it follows that $e_{\ast}>T^{\ast-1}$ and also $t\ast T>t>t\ast T^{\ast-1}\geq e_{\ast}$.

By considering $b=t\ast T$ and $a=t\ast T^{\ast-1}$, the estimation (\ref{he}) becomes
\begin{align*}
    |x(t)|&\leq Kh(T)^{-\alpha}\{|x(t\ast T^{\ast-1})| + |x(t\ast T)|\} \quad \textnormal{with $t>T$}\\
    &\leq 2Kh(T)^{-\alpha}\sup\{|x(u)|\colon t\ast T^{\ast-1}\leq u \leq t\ast T\}  \quad \textnormal{with $t>T$}\\
    &\leq  \theta \sup\limits_{|u\ast t^{\ast-1}|_{\ast}\leq T}|x(u)|  \quad \textnormal{with $t>T$},
\end{align*}
where $\theta=2Kh(T)^{-\alpha}$. Since $h(\cdot)$ is strictly increasing, we have that $\theta<1$ if $T$ is sufficiently large and the property (iii) follows.

Now, we assume that \eqref{lin} has a uniform bounded $h$-growth on $[e_{\ast},+\infty)$. From Theorem \ref{Lem-UNC,BG-hD} it follows that (iii)$\Rightarrow$(i) and theorem follows.
\end{proof}

\end{document}